\renewcommand{\vec}[1]{\boldsymbol{#1}}
\newtheorem{thm}{Theorem}[section]
\newtheorem{defn}{Definition}[section]
\newtheorem{lem}[thm]{Lemma}
\newtheorem{rmk}[thm]{Remark}
\newtheorem{prop}[thm]{Proposition}
\newtheorem{cor}[thm]{Corollary}
\numberwithin{equation}{section}
\newcommand{\RR}{\mathbb{R}}  
\newcommand{\ZZ}{\mathbb{Z}}
\newcommand{\CC}{\mathbb{C}}
\newcommand{\p}{\partial}
\newcommand{\Acal}{\mathcal{A}}
\newcommand{\Bcal}{\mathcal{B}}
\newcommand{\Ccal}{\mathcal{C}}
\newcommand{\Dcal}{\mathcal{D}}
\newcommand{\Ecal}{\mathcal{E}}
\newcommand{\Fcal}{\mathcal{F}}
\newcommand{\Hcal}{\mathcal{H}}
\newcommand{\Lcal}{\mathcal{L}}
\newcommand{\Ncal}{\mathcal{N}}
\newcommand{\Pcal}{\mathcal{P}}
\newcommand{\image}{\text{Im}}
 \newcommand{\wnk}{\vec{\mathrm{k}}}
\newcommand{\myvec}[1]%
{\stackrel{\raisebox{-2pt}[0pt][0pt]{\small$\rightharpoonup$}}{#1}}
\providecommand{\keywords}[1]
{
	\small	
	\textbf{Keywords} #1
}
\newcommand{\grad}{\mathcal{GRAD}}
\newcommand{\curl}{\mathcal{CURL}}
\newcommand{\dive}{\mathcal{DIV}}
\begin{document}
%

\title{Kernel compensation method for Maxwell eigenproblem in photonic crystals with mimetic finite difference discretizations} 
\author{Chenhao Jin\thanks{School of Mathematical Sciences, University of Science and Technology of China, Hefei, Anhui 230026, P.R. China.  E-mail: kanit@mail.ustc.edu.cn.}
	\and Yinhua Xia\thanks{School of Mathematical Sciences, University of Science and Technology of China, Hefei, Anhui 230026, P.R. China.  E-mail: yhxia@ustc.edu.cn. The research of Yinhua Xia was partially supported by National Key R\&D Program of China No. 2022YFA1005202/2022YFA1005200 and NSFC grant No. 12271498. 
	}
	\and Yan Xu\thanks{Corresponding author. School of Mathematical Sciences, University of Science and Technology of China, Hefei, Anhui 230026, P.R. China and Laoshan Laboratory, Qingdao 266237, P.R. China.  E-mail: yxu@ustc.edu.cn. The research of Yan Xu was partially supported by NSFC grant No. 12071455  and Laoshan Laboratory  (No.LSKJ202300305).
	}
}

\date{} 

\maketitle 

\begin{abstract}
We present a kernel compensation method for Maxwell eigenproblem in photonic crystals to avoid the infinite-dimensional kernels that cause many difficulties in the calculation of energy gaps.  The quasi-periodic problem is first transformed into a periodic one on the cube by the Floquet-Bloch theory. Then the compensation operator is introduced in Maxwell's equation with the shifted curl operator. The discrete problem depends on the compatible discretization of the de Rham complex, which is implemented by the mimetic finite difference method in this paper. We prove that the compensation term of the discretization exactly fills up the kernel of the original diecrete problem and avoids spurious eigenvalues. Also, we propose an efficient preconditioner and its FFT and multigrid solvers, which allow parallel computing. Numerical experiments for different three-dimensional lattices in photonic crystals are performed to validate the accuracy and effectiveness of the method. 
\end{abstract}

\keywords{Maxwell eigenproblem; kernel compensation method; de Rham complex; mimetic finite difference method; preconditioning.}


\section{Introduction}
Photonic crystals (PCs) are lattice-like periodic dielectric structures. With a specific anisotropic material, a PC can have cutoff frequencies that prohibit light transmission in all directions. This property of PCs has wide applications in lasers, filters, and optical transistors \cite{PCs_mold}. The effective calculation of the band gap, as well as the analysis of the energy gap, has always played an essential role both in the theory of physical optics and in practical manufacturing.
The behavior of PCs can be described by a time-harmonic Maxwell's equation consisting of a double curl operator. Based on the Floquet-Bloch theory, the eigenfunctions should satisfy the quasi-periodic boundary condition.

Previous numerical methods can be divided into two types according to the treatment of such boundary conditions. The first type involves the direct discretization of the double curl operator using finite difference methods, such as Yee's scheme \cite{yee1966} or the mimetic finite difference (MFD) method \cite{MFDM2016}.  However, the treatment of the preconditioning matrix requires a delicate linear solver. The second type involves the transformation of the quasi-periodic condition into a periodic condition, which is then compatible with finite element methods \cite{hesthaven2004high, Huoyuan2022, lu2021auxiliary, multilevel_mixed_eigenvalue} and plane wave expansion methods \cite{boffi2006modified, hsue2005extended, white2002computing, johnson2001block}  based on variational formulations.  This transformation, employed in the computation of PCs and quantum mechanics, e.g. \cite{boffi2006modified, lu2021auxiliary}, introduces the concept of the shifted Nabla operator.

The null space of the curl operator in Maxwell's equations, which contains all gradient fields, has an infinite dimension. This poses another challenge in numerical discretization, as the null space of the discrete eigenproblem grows with grid refinement. However, in practice, only the low-energy bands or the first few non-zero frequencies are of interest. Several approaches have been proposed to address the issue of the null space. In \cite{null_free_JD, huang2013eigendecomposition}, a null space-free Jacobi-Davidson eigensolver was introduced specifically for the discretization using Yee's scheme. Another approach, presented in \cite{boffi2006modified}, involves using a mixed finite element method to eliminate the null space.  The kernel compensation method was also developed in \cite{lu2021auxiliary}, building upon edge-conforming elements. {The kernel compensation method shares similarities with the penalized scheme discussed by Monk \cite{petermonk2020}, and is also implicitly reflected in the work of Kikuchi \cite{FUMIO1987509, kikuchi1989mixed}.}

%


In this paper, we propose to apply the kernel compensation method based on the compatible MFD discretization. The MFD method requires additional discretizations for the shifted Nabla operator. The kernel compensation method introduces a compensation operator in the modified auxiliary scheme. This compensation operator fills up the null space of the original eigenproblem with the eigensystem of the compensation operator.  To implement the kernel compensation method, we use a compatible MFD discretization satisfying the discrete de Rham complex chain for the shifted Nabla operator. This discretization ensures that the method is consistent with the underlying mathematical framework and accurately captures the behavior of the shifted Nabla operator.  Compared to the auxiliary scheme based on the curl-conforming finite element \cite{lu2021auxiliary}, the parameter in the compensation operator is mesh size independent. To solve the kernel compensation scheme efficiently, we adopt a simple preconditioner and develop two preconditioner solvers based on the fast Fourier transform (FFT) and the multigrid method, respectively.

			
			The outline of this paper is as follows.  In Section \ref{se:model}, we briefly introduce the eigenproblem of PCs and discuss its quasi-periodic transformations on different lattices. In Section \ref{se:kcm}, we propose the kernel compensation method and establish its mathematical foundations. Section \ref{se:mfd} is devoted to the compatible MFD discretization that satisfies the de Rahm complex, meeting the requirements of the kernel compensation method. A simple preconditioning matrix for the eigensolver is introduced in Section \ref{se:pre}, and the FFT and multigrid solvers are developed for this preconditioning system.  In Section \ref{se:num}, numerical tests for the different lattices are presented to validate the accuracy, effectiveness, and parallel efficiency. Finally, some concluding remarks are given in Section \ref{se:con}.

   \section{Model problem}\label{se:model}
   The mathematical interpretation of three-dimensional photonic crystals (PCs) is given by  time-harmonic Maxwell's equations with the divergence-free conditions:
			\begin{align}\label{time_harmonic_maxwell}
				\left\{
				\begin{aligned}					
					&\nabla\times\vec H=\mathrm{i}\omega\varepsilon\vec E,\ \ \nabla\times\vec E=-\mathrm{i}\omega\mu\vec H,\ \ \ \mbox{in  }\RR^3,\\
                        &\nabla\cdot(\varepsilon\vec E)=0,\ \ \nabla\cdot(\mu\vec H)=0,\ \ \ \mbox{in  }\RR^3,
				\end{aligned}
				\right.
			\end{align}
where $\vec H,\vec E$ are magnetic and electric fields, $\mathrm{i} = \sqrt{-1}$ denotes the imanginary unit   and $\omega$ is the frequency. The parameters $\varepsilon$ and $\mu$ are the permittivity and permeability, respectively. In this article, we assume $\mu\equiv1$ without loss of generality. Thus equation (\ref{time_harmonic_maxwell}) can be rewritten in terms of the magnetic field only,  		
\begin{align}\label{magnetic_maxwell}
				\left\{
					\begin{aligned}						&\nabla\times(\varepsilon^{-1}\nabla\vec\times \vec H)=\omega^2\vec H,\ \ \  \mbox{in  }\RR^3,\\
						&\nabla\cdot\vec H=0, \ \ \ \mbox{in  }\RR^3.
					\end{aligned}
				\right.				
			\end{align}
It can also be written (\ref{time_harmonic_maxwell}) in terms of the electric field as follows
\begin{align}\label{magnetic_maxwell-E}
				\left\{
					\begin{aligned}						&\nabla\times\nabla\vec\times \vec E=\omega^2\varepsilon\vec E,\ \ \ \vec x\in\RR^3,\\
						&\nabla\cdot(\varepsilon\vec E)=0.
					\end{aligned}
				\right.				
			\end{align}
 Even though equations (\ref{magnetic_maxwell}) and (\ref{magnetic_maxwell-E}) are equivalent, it is simpler to develop the kernel compensation method with the model (\ref{magnetic_maxwell}). Hereafter, we will use the model equation (\ref{magnetic_maxwell}). 
                
			The permittivity $\varepsilon$ is such that $\varepsilon=\varepsilon_1$ inside a given material and $\varepsilon=\varepsilon_0$ outside it, with fixed constants $\varepsilon_0,\varepsilon_1>0$. As the material is periodic, we also have
			$$\varepsilon(\vec x+\vec a)=\varepsilon(\vec x), \quad \forall\vec x\in\RR^3,$$ for each $\vec a$ belongs to the Bravias lattice 
			$$ \left\{\sum\limits_{n=1}^3k_n\vec a_n,\ \ k_n\in\ZZ\right\}.$$ Here $\{\vec a_n, n=1,2,3\}$ are lattice translation vectors that span the primitive cell
\begin{equation}\nonumber 
\Omega:=\left\lbrace \vec x \in \RR^3: \vec x = \sum_{n=1}^3 x_n  \vec a_n,
 \; x_n \in [0,1], \;n=1, 2, 3 \right\rbrace.
\end{equation}						
Thus, the magnetic field $\vec H$ satisfies the quasi-periodic condition 
$$\vec H(\vec x+\vec a)=e^{\mathrm{i}\wnk\cdot\vec a}\vec H(\vec x)$$ 
due to Bloch's theorem, where  $\wnk\in\RR^3$ belongs to the first Brillouin zone \cite{PCs_mold}. 

    
To simplify this quasi-periodic boundary condition,  we can define $\vec H_{\wnk}(\vec x):=e^{-\mathrm{i}\wnk\cdot\vec x}\vec H(\vec x)$, then for any $\vec a$ in the Bravias lattice,
			\begin{align}\label{Bloch_thm}
				\vec H_{\wnk}(\vec x+\vec a)=e^{-\mathrm{i}\vec \alpha\cdot(\vec x+\vec a)}\vec H(\vec x+\vec a)=e^{-\mathrm{i}\wnk \cdot\vec x}\vec H(\vec x)=\vec H_{\wnk}(\vec x).
			\end{align}
			By substituting $\vec H(\vec x)=e^{\mathrm{i}\wnk\cdot\vec x}\vec  H_{\wnk}(\vec x)$ into (\ref{magnetic_maxwell}), the periodic model defined on the primitive cell  $\Omega$ is as follows
			\begin{align}\label{curlcurl}
				\left\{
				\begin{aligned}					&\nabla_{\wnk}\times(\varepsilon^{-1}\nabla_{\wnk}\times\vec  H_{\wnk} )=\omega^2\vec  H_{\wnk} ,\ \ \ \vec x\in\Omega,\\
					&\nabla_{\wnk}\cdot\vec  H_{\wnk}=0,\\
					&  \vec  H_{\wnk}(\vec x+\vec a_n)=\vec  H_{\wnk}(\vec x), \quad n=1,2,3.
				\end{aligned}
				\right.
			\end{align}
Here $\nabla_{\wnk}:=\nabla+\mathrm{i}\wnk I \ (I$ is the identity operator) is the shifted nabla operator that naturally defines the shifted curl and divergence operator: 
			\begin{align}\label{shifted_nabla}
				\begin{aligned}					
                \nabla_{\wnk}\times \vec  H_{\wnk}: &=\nabla\times\vec  H_{\wnk}+\mathrm{i}(\wnk\times\vec  H_{\wnk}),\\
				\nabla_{\wnk}\cdot \vec  H_{\wnk}:	&=\nabla\cdot\vec  H_{\wnk}+\mathrm{i}(\wnk\cdot \vec  H_{\wnk}).
				\end{aligned}
			\end{align} 
			
In this paper, we only need to consider the cube domain $\Omega=[0,l]^3$. For a more general case of the lattice, we can use the following coordinate change to transform the problem (\ref{curlcurl}) into a cube. The coordinate change is $\vec x=A\vec y$ and denotes			
			$$\vec H_{A,\wnk}(\vec y):=\vec H_{\wnk}(A\vec y),\ \ A=(\vec a_1,\vec a_2,\vec a_3).$$
Then the periodic condition 
$$\vec  H_{\wnk}(\vec x+\vec a_n)=\vec  H_{\wnk}(\vec x),\quad n=1,2,3$$		becomes 
$$\vec H_{A,\wnk}(\vec y+\vec e_n)=\vec H_{A,\wnk}(\vec y) ,\quad n=1,2,3,$$
where $\vec e_1=(1,0,0)^T,\ \vec e_2=(0,1,0)^T,\ \vec e_3=(0,0,1)^T$.
By introducing $\nabla_{A,\wnk}=\nabla_A+\mathrm{i}\wnk I$ and
    \begin{align}\label{coordinate_change}
        \nabla_{A}=\left(\sum\limits_{j=1}^3b_{j1}\frac{\p}{\p y_j}\ ,\ \sum\limits_{j=1}^3b_{j2}\frac{\p}{\p y_j}\ ,\ \sum\limits_{j=1}^3b_{j3}\frac{\p}{\p y_j}\right)^T.
    \end{align}
with  $b_{ij}=(A^{-1})_{ij}$, equation (\ref{curlcurl}) can be written as
    \begin{align}\label{curlcurl_y}
    \left\{
        \begin{aligned}
            &\nabla_{A,\wnk}\times\left(\varepsilon^{-1}\nabla_{A,\wnk}\times\vec H_{A,\wnk}(\vec y)\right)=\omega^2\vec H_{A,\wnk}(\vec y),\ \ \vec y\in[0,l]^3.\\
        &\nabla_{A,\wnk}\cdot\vec H_{A,\wnk}=0,\\
         & \vec H_{A,\wnk}(\vec y+\vec e_n)=H_{A,\wnk}  (\vec y), \quad n=1,2,3,
        \end{aligned}
        \right.
    \end{align}
    which is in the cube domain again. 
 
 Since the null space of the curl operator contains an infinite-dimensional subspace, the discrete approximation of this operator also has a huge null space whose dimension increases as the grid is refined. From a physical point of view, only the low-energy bands are desirable, i.e., the first few non-zero frequencies of the problem. While eigensolvers compute eigenvalues by ascending order, a huge null space will be a disaster for the numerical method. The motivation of the kernel compensation method is to fill the null space and move them to the back seats.

 \section{Kernel compensation method}\label{se:kcm}
 
  The discrete Maxwell eigenvalue problem in photonic crystals has a huge kernel, which leads to numerical difficulty. In \cite{lu2021auxiliary}, Lu and Xu presented a scheme by adding a penalty term to the edge finite element formulation. It is proved that the penalty term complements the kernel of the discrete Maxwell operator.  The discrete Maxwell eigenproblem can be treated as a Laplace eigenproblem in this approach. Thus, the null space issue can be avoided.  Meanwhile, a mesh size dependent penalty parameter is used, which may increase the condition number of the linear system dramatically.  In this section, we will introduce the kernel compensation method in finite difference discretization, which provides a mesh size independent compensation operator. 

  A finite difference scheme for (\ref{curlcurl_y}) can be written in the matrix form as follows
  \begin{align}\label{first_view_mfdm} 
      \left\{
    \begin{aligned}
        &(\Acal M_0\Acal')\vec H_h=\omega_h^2\vec H_h,\\
        &\Bcal\vec H_h=0.
    \end{aligned}
      \right.
  \end{align}
  where $\vec H_h,\ \omega_h$ are  numerical approximations of $\vec H_{A,\alpha},\ \omega$, respectively. Matrices $\Acal$ and  $\Bcal$ are discrete operaors for $\nabla_{A,\wnk}\times$ and 	$\nabla_{A,\wnk}\cdot$. $\Acal'$ and $\Bcal'$ denote their conjugate transposes. $M_0$ is a diagonal matrix with positive entries arising from the dielectric coefficient $\varepsilon^{-1}$. Supposing the integer $N>0$ refers to the grid size in each direction, we have
  $$
  \Acal,M_0\in\CC^{3N^3\times 3N^3},\ \Bcal\in\CC^{N^3\times 3N^3}.
  $$
   Since the curl operator $\nabla_{A,\wnk}\times$ has an infinite-dimensional null space, the dimension of discrete kernel space $\ker\Acal'$ grows correspondingly, which is $O(N^3)$. 
   
   \subsection{Eigenvalue distributions}
   We will first give the eigenvalue distributions for the matrices $\Acal M_0\Acal'$ and $\Bcal'\Bcal$. 
   
   \begin{lem}\label{lm:kera} Assume $M_0$ is a diagonal matrix with positive entries,    for $\vec u\in \CC^{3N^3}$,      \begin{align*}
   \Acal' \vec u = 0  &\Leftrightarrow  \Acal M_0\Acal' \vec u = 0, \\
   \Bcal \vec u = 0  & \Leftrightarrow  \Bcal'\Bcal \vec u = 0.
  \end{align*} 
   \end{lem}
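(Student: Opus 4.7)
The plan is to handle each equivalence separately, observing that one direction of each is immediate and the other follows from a standard positivity argument.

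For both statements, the forward implication (reading left to right) is trivial: if $\Acal'\vec u = 0$ then multiplying by $\Acal M_0$ gives $\Acal M_0 \Acal'\vec u = 0$, and similarly for the divergence case. So the real work is in the reverse direction.

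For the reverse direction of the first equivalence, I would take the Hermitian inner product of $\Acal M_0 \Acal'\vec u = 0$ with $\vec u$, obtaining $\vec u^{*}\Acal M_0 \Acal'\vec u = 0$. Setting $\vec v := \Acal'\vec u$ this becomes $\vec v^{*} M_0 \vec v = 0$. Since $M_0 = \diag(d_1,\dots,d_{3N^3})$ with all $d_i > 0$, the sum $\sum_i d_i |v_i|^2 = 0$ forces every $v_i = 0$, hence $\Acal'\vec u = 0$. The reverse direction of the second equivalence is the same argument with $M_0$ replaced by the identity: $\Bcal'\Bcal\vec u = 0$ implies $\vec u^{*}\Bcal'\Bcal\vec u = \|\Bcal\vec u\|_2^2 = 0$, giving $\Bcal\vec u = 0$.

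There is no genuine obstacle here; the only thing one needs to be careful about is that $\Acal'$ denotes the conjugate transpose (as stated right after equation \eqref{first_view_mfdm}), so that $\vec u^{*}\Acal M_0 \Acal'\vec u = (\Acal'\vec u)^{*} M_0 (\Acal'\vec u)$ is manifestly the $M_0$-weighted squared norm of $\Acal'\vec u$. Positivity of the diagonal entries of $M_0$ (inherited from $\varepsilon^{-1}>0$) is what makes this a genuine norm, and that is the essential ingredient that the lemma's hypothesis supplies.
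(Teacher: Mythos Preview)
Your proposal is correct and matches the paper's approach exactly: the paper does not give a standalone proof of the lemma, but the same inner-product positivity argument (take $\vec x'\Acal M_0\Acal'\vec x=0$ and use that $M_0$ is positive diagonal to conclude $\Acal'\vec x=0$) appears verbatim in the proof of the later proposition on the redundancy of the divergence constraint. There is nothing to add.
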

We use $\ker \Acal$ and $\image \Acal$ to denote the null space and the image of the matrix $\Acal$. This lemma shows that 
    \begin{align*}
    \ker\Acal'=\ker\Acal M_0\Acal', \quad \ker\Bcal=\ker\Bcal'\Bcal,
 \end{align*}	
 and
 \begin{align*}
    &\image\Acal=(\ker\Acal')^{\perp} =(\ker\Acal M_0\Acal')^{\perp} = \image\Acal M_0\Acal', \\
    &\image\Bcal'=(\ker\Bcal)^{\perp} = (\ker\Bcal'\Bcal)^{\perp}= \image\Bcal'\Bcal.
 \end{align*}	
 Denote $N_0^\Acal=\dim\ker\Acal',\ N_0^\Bcal=\dim\ker\Bcal$. We have the eigenvalue distributions as
 \begin{align*}
			&\textbf{eigenvalues of }\Acal M_0\Acal'\ \text{to be}:\ \ \overbrace{0,0,\cdots,0}^{N^\Acal_{0}\text{ zeros}}<\overbrace{\lambda^\Acal_{1}\leq\lambda^\Acal_{2}\leq\cdots\leq\lambda^\Acal_{N^\Acal_{1}}}^{N_{1}^\Acal\text{ nonzeros}},\\
			&\textbf{eigenvalues of }\Bcal'\Bcal\ \text{to be}:\ \ \overbrace{0,0,\cdots,0}^{N^\Bcal_{0}\text{ zeros}}<\overbrace{\lambda^\Bcal_{1}\leq\lambda^\Bcal_{2}\leq\cdots\leq\lambda_{N^\Bcal_{1}}^\Bcal}^{N^\Bcal_{1}\text{ nonzeros}}.
		\end{align*}

 The kernel compensation method is based on a fundamental assumption of the compensation operator:
    \begin{align}\label{assumpt1}
       \Bcal\Acal=0 \; (\image\Acal\subset\ker\Bcal ).
    \end{align}
 Then the decomposition of the space follows:
    \begin{align}\label{ortho_decom}
        \CC^{3N^3}=\image \Acal\oplus\image \Bcal'\oplus \Hcal,\quad \Hcal:=\ker \Acal'\cap\ker \Bcal.
    \end{align}
  The assumption \eqref{assumpt1} and Lemma \ref{lm:kera} implies that 
      \begin{align}
          \CC^{3N^3}=\image (\Acal M_0\Acal')\oplus\image (\Bcal'\Bcal)\oplus \Hcal,\quad \Hcal:=\ker \Acal'\cap\ker \Bcal.
       \end{align}
   We also assume that the dimension of the space $\Hcal$ satisfies 
   \begin{align}\label{assumpt2}
       \quad \text{Either}\ \dim\Hcal=0,\ \text{or}\ \dim\Hcal=O(1),
   \end{align}
   which claims that the dimension of $\Hcal$ will not increase with the mesh size.
 $\Hcal$ coincides with the null space of $\Acal M_0\Acal'+\gamma\Bcal'\Bcal,$ for any penalty number $\gamma>0$.    
   \begin{prop}\label{penalty_aux_correctness}
       Based on the assumptions \eqref{assumpt1} and  \eqref{assumpt2}, we have the following eigenvalue distributions  for $\Acal M_0\Acal'+\gamma \Bcal'\Bcal$
		\begin{align*}
			           \overbrace{0,\cdots,0}^{\dim\Hcal\text{ zeros}}<\lambda_1\leq\lambda_2\leq\cdots\leq\lambda_m\leq\cdots,
		\end{align*}
and it holds that 
$$\{\lambda_1,\cdots,\lambda_m,\cdots\}=\{\lambda_1^\Acal,\cdots,\lambda_{N_1^\Acal}^\Acal\}\cup\{\gamma\lambda_1^\Bcal,\cdots,\gamma\lambda_{N_1^\Bcal}^\Bcal\}.$$
    \end{prop}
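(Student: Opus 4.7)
The plan is to exploit the orthogonal direct sum decomposition \eqref{ortho_decom} and to show that the operator $T:=\Acal M_0\Acal'+\gamma \Bcal'\Bcal$ is block-diagonal with respect to this splitting, so that its spectrum is simply the disjoint union of the spectra of its three restricted blocks. The key algebraic input is assumption \eqref{assumpt1}: from $\Bcal\Acal=0$, taking conjugate transposes gives $\Acal'\Bcal'=0$. The orthogonality of the three summands in \eqref{ortho_decom} follows immediately, since $\langle\Acal\vec u,\Bcal'\vec v\rangle=\langle\Bcal\Acal\vec u,\vec v\rangle=0$ forces $\image\Acal\perp\image\Bcal'$, and $\Hcal=\ker\Acal'\cap\ker\Bcal$ is by definition the orthogonal complement of $\image\Acal+\image\Bcal'$.

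Next, I would verify that each summand is $T$-invariant. For $\vec u=\Acal\vec w\in\image\Acal$, one computes $T\vec u=\Acal M_0\Acal'\Acal\vec w+\gamma\Bcal'\Bcal\Acal\vec w$; the second term vanishes by \eqref{assumpt1}, and the first lies in $\image\Acal$. Symmetrically, for $\vec u=\Bcal'\vec z\in\image\Bcal'$, the first term vanishes by $\Acal'\Bcal'=0$, leaving $\gamma\Bcal'\Bcal\Bcal'\vec z\in\image\Bcal'$. On $\Hcal$, both summands vanish outright since $\Acal'\vec u=0$ and $\Bcal\vec u=0$. Consequently $T$ restricts to the zero operator on $\Hcal$, to $\Acal M_0\Acal'|_{\image\Acal}$ on $\image\Acal$, and to $\gamma\Bcal'\Bcal|_{\image\Bcal'}$ on $\image\Bcal'$.

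Collecting the three block spectra then yields the stated distribution. The $\Hcal$-block contributes $\dim\Hcal$ zero eigenvalues. By Lemma \ref{lm:kera} one has $\ker(\Acal M_0\Acal')=\ker\Acal'=(\image\Acal)^\perp$, so the nonzero eigenvalues of $\Acal M_0\Acal'$ on $\CC^{3N^3}$ coincide with those of its restriction to $\image\Acal$, namely $\{\lambda_1^\Acal,\ldots,\lambda_{N_1^\Acal}^\Acal\}$; the analogous statement for $\Bcal'\Bcal$ on $\image\Bcal'$ yields $\{\gamma\lambda_1^\Bcal,\ldots,\gamma\lambda_{N_1^\Bcal}^\Bcal\}$. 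Note that assumption \eqref{assumpt2} does not actually enter the algebraic derivation; it only guarantees that the $\dim\Hcal$ leading zeros do not proliferate under mesh refinement, which is the practical motivation for the scheme.

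The argument is short and the only real obstacle is bookkeeping: one must confirm that the counts match, namely that $\dim\Hcal+N_1^\Acal+N_1^\Bcal=3N^3$, so that no eigenvalue is missed or double-counted in the union. This follows from dimension additivity in the orthogonal decomposition \eqref{ortho_decom} together with $\dim\image\Acal=N_1^\Acal$ and $\dim\image\Bcal'=N_1^\Bcal$, which are themselves consequences of Lemma \ref{lm:kera}. Once these identifications are in place, the eigenvalue list on the right-hand side of the stated equality is exhaustive.
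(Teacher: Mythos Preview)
Your argument is correct and is in fact cleaner than the paper's own proof. The paper argues via the Courant--Fischer min--max principle: it writes
\[
\lambda_m=\min_{\chi\subset\Hcal^\perp}\max_{\vec x\in\chi,\,\|\vec x\|=1}\bigl(\|M_0^{1/2}\Acal'\vec x\|^2+\gamma\|\Bcal\vec x\|^2\bigr),
\]
then decomposes each test subspace $\chi$ along $\image\Acal$ and $\image\Bcal'$ and optimizes separately. Your route is the direct spectral one: from $\Bcal\Acal=0$ and $\Acal'\Bcal'=0$ you show that $T=\Acal M_0\Acal'+\gamma\Bcal'\Bcal$ leaves each summand of \eqref{ortho_decom} invariant, hence $T$ is block-diagonal and its spectrum is the multiset union of the block spectra. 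This is shorter, avoids the variational machinery, and handles the general $M_0$ without the paper's ``without loss of generality $M_0=I$'' reduction. The paper's min--max computation does yield the explicit interleaving formula \eqref{lambda_auxi} as a by-product, which feeds into Remark~\ref{pnt_criterion}; but your block-diagonal picture gives the same conclusion immediately, since once the nonzero spectrum is the multiset union $\{\lambda_i^\Acal\}\cup\{\gamma\lambda_j^\Bcal\}$, the $m$-th smallest element is exactly what \eqref{lambda_auxi} asserts. Your closing remark that assumption~\eqref{assumpt2} plays no role in the algebra is also accurate: it is a practical hypothesis, not a logical one, for this proposition.
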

    \begin{proof}
        Without loss of generality, we assume that $M_0$ is an identity matrix. By the min-max theorem (a generalized version of the Courant-Fischer min-max theorem \cite{golub2013matrix}): 
        \begin{align*}
            \lambda_m=\min_{\chi\subset \Hcal^{\perp}}\max\limits_{\substack{\vec x\in \chi \\ \|\vec x \| =1}}(\|\Acal'\vec x\|^2+\gamma\|\Bcal\vec x\|^2),
        \end{align*}
       where $\chi$ varies among all $m$-dimenisonal subspaces of $\Hcal^\perp$ and $\|\cdot\|$ is $L^2$ norm of vectors. Each $\chi\subset\Hcal^\perp$ can be decomposed into 
$$
    \chi=\chi_1\oplus\chi_2,\quad \chi_1:=\image\Acal\cap\chi,\quad\chi_2:=\image\Bcal'\cap\chi.
$$
We set $m_i=\dim\chi_i$, thus $m_1+m_2=m.$ Since $\chi_1\subset\ker\Bcal$ and $\chi_2\subset\ker\Acal'$, we have
  \begin{align*}
      \begin{aligned}
\lambda_m&=\min\limits_{\substack{m_1+m_2=m\\ m_1,m_2\in\ZZ_{\geq0}}}\min_{\substack{\dim\chi_1=m_1\\ \dim\chi_2=m_2}}\max\{\|\Acal'\vec x_1\|^2+\gamma\|\Bcal\vec x_2\|^2:\ \vec x_1\in\chi_1,\ \vec x_2\in\chi_2,\ \vec x_1+\vec x_2\in S_{\chi}\}\\
&=\min\limits_{\substack{m_1+m_2=m\\ m_1,m_2\in\ZZ_{\geq0}}}\max\{\lambda_{m_1}^\Acal\|\vec x_1\|^2+\gamma\lambda_{m_2}^\Bcal\|\vec x_2\|^2:\ \|\vec x_1\|^2+\|\vec x_2\|^2=1\}\\
&=\min\limits_{\substack{m_1+m_2=m\\ m_1,m_2\in\ZZ_{\geq0}}}\max\{\lambda_{m_1}^\Acal,\gamma\lambda_{m_2}^\Bcal\},
      \end{aligned}
  \end{align*}
  where we set $\lambda_0^\Acal=\lambda_0^\Bcal=0.$ So by induction on $m$ we get:
\begin{align}\label{lambda_auxi}
      \lambda_m=\text{the $m$-th smallest number among }\{\lambda_1^\Acal,\cdots,\lambda_m^\Acal\}\cup\{\gamma\lambda_1^\Bcal,\cdots,\gamma\lambda_m^\Bcal\}.
\end{align}
This proves the result.
    \end{proof}
    \begin{rmk}\label{pnt_criterion}
    The proof of Proposition \ref{penalty_aux_correctness} also indicates that if $\gamma\lambda_1^\Bcal>\lambda_m^\Acal$, then the eigenvalues of  $\Acal M_0\Acal'+\gamma \Bcal'\Bcal$ satisfy 
    $$\lambda_i=\lambda_i^\Acal,  \quad 1\leq i\leq m.$$
\end{rmk}
     \subsection{The kernel compensation method}
 
 With the discussion on the eigenvalue distributions, we present the kernel compensation auxiliary scheme for solving (\ref{first_view_mfdm}): 
    \begin{align}\label{Ker_comp}
        (\Acal M_0\Acal'+\gamma\Bcal'\Bcal)\vec H_h=\omega_h^2\vec H_h,
    \end{align}
  where $\gamma\Bcal'\Bcal$ is the compensation operator with the penalty number {$\gamma>0$}. Under the  assumptions (\ref{assumpt1}), (\ref{assumpt2}) of the operator $\Acal$ and $\Bcal$, Proposition \ref{penalty_aux_correctness} indicates  the auxiliary scheme \eqref{Ker_comp} is able to reach nonzero eigenvalues of (\ref{first_view_mfdm}) without computing numerous zero eigenvalues. Thus the null space disaster and spurious eigenvalues can be avoided.

  \subsection{Comparison with finite element method}
  Previous discussions are based on the formulation (\ref{first_view_mfdm}) given by finite difference discretization. The kernel compensation method also has a finite element version, as is shown in \cite{lu2021auxiliary}, whose formulation is written by
  \begin{align}\label{first_view_fem}
      \left\{
         \begin{aligned}
             & (\Acal M_0\Acal')\vec x=\omega^2M\vec x,\\
             & \Bcal \vec x=0.
         \end{aligned}
      \right.
  \end{align}
  The existence of a nontrivial mass matrix $M$ brings a slight change to the method. The fundamental assumption (\ref{assumpt1}) becomes
  \begin{align}\label{assumpt1_fem}
      \Bcal M^{-1}\Acal=0.
  \end{align}
  Thus the orthogonal decomposition corresponding with (\ref{ortho_decom}) is
  \begin{align}
      \CC^{3N^3}=\image \Acal\oplus_M\image \Bcal'\oplus_M \Hcal,\quad \Hcal:=\ker \Acal'\cap\ker \Bcal.
  \end{align}
  The notation ``$\oplus_M$" represents that direct sum spaces are mutually $M$-orthogonal, while ``$\oplus$" in (\ref{ortho_decom}) contains orthogonality under Euclidean inner product. The results in \cite{lu2021auxiliary} also demonstrate that, for a sufficiently large $\gamma>0$,
  $$
  (\Acal M_0\Acal'+\gamma\Bcal'\Bcal)\vec x=M\vec x
  $$
  computes the same several smallest eigenvalues as those of (\ref{first_view_fem}). 
  
While the finite element method with kernel compensation exhibits greater universality, it is essential to highlight that the simplicity of the finite difference scheme offers both convenience and advantages during numerical computations. Here, we outline the scenarios in which the finite difference method outperforms finite element discretization:
  \begin{itemize}
  \item Given that the scaling operates at the $O(h^3)$ level, where $h>0$ denotes the grid size, the penalty $\gamma$ in the finite element discretization should align with the $O(h^{-3})$ level. In a numerical experiment detailed in \cite{lu2021auxiliary}, selecting $\gamma=2/h^3$   still yields spurious eigenvalues. In Section \ref{se:mfd}, we will demonstrate that our approach necessitates a significantly reduced penalty for efficacy.

  \item While a finite element method on a uniform grid may require more degrees of freedom, it demonstrates comparable accuracy to the finite difference method in testing. However, the creation of an adaptive mesh in 3D space proves to be excessively expensive.
  
  \item The circularity of the matrix allows for utilizing FFT to perform both preconditioning and matrix multiplication within the finite difference scheme.
\end{itemize}  
  
%
   
   \section{Compatible finite difference discretization}	\label{se:mfd}
   In this section, we present the compatible MFD method for Maxwell's equation with the shifted curl operator and will show that the resulting scheme satisfies the assumptions in Section \ref{se:kcm}.
   
   
		\subsection{MFD discretization of function spaces and operators}
		\label{subse:mfdsp}
\subsubsection{Grid function spaces}		
		We first consider the case $\vec a_n=\vec e_n$. Domain $\Omega=[0,l]^3$ is uniformly divided into $N^3$ small cubes, each in size $h\times h\times h$, with $h=l/N.$ Grid points are $(x_i,y_j,z_k)$ with $x_i=ih,\ y_j=jh$ and $z_k=kh$. The degrees of freedom (DoFs) of a scalar field are located at nodes or at the centers of cells, while the DoFs of a vector field are located either at the centers of edges or at the centers of faces (see Figure \ref{DoFs}). Therefore grid function spaces related to nodes, edges, faces and cells are defined as follows:
        \begin{align}\label{DoFs_grid}
            \begin{aligned}
                &\Ncal_h=\{\phi_{i,j,k}\}\simeq\CC^{N^3},\\
                &\Ecal_h=\{(u_1)_{i-\frac{1}{2},j,k}\}\cup\{(u_2)_{i,j-\frac{1}{2},k}\}\cup\{(u_3)_{i,j,k-\frac{1}{2}}\}\simeq\CC^{3N^3},\\
                &\Fcal_h=\{(v_1)_{i,j-\frac{1}{2},k-\frac{1}{2}}\}\cup\{(v_2)_{i-\frac{1}{2},j,k-\frac{1}{2}}\}\cup\{(v_3)_{i-\frac{1}{2},j-\frac{1}{2},k}\}\simeq\CC^{3N^3},\\
                &\Ccal_h=\{\psi_{i-\frac{1}{2},j-\frac{1}{2},k-\frac{1}{2}}\}\simeq\CC^{N^3},
            \end{aligned}            
        \end{align}
where $1\leq i,j,k\leq N$,  $\phi,\psi$ are scalar fields while $ \vec u=(u_1,u_2,u_3),\ \vec v=(v_1,v_2,v_3)$ are vector fields.  Here the notation $\simeq$ refers to isomorphism, while we identify these sets as complex Euclidean spaces. Due to the periodic boundary condition, we suppose evaluation at $\vec x\in \Omega$ equals that of $\vec x+\vec e_n$ for $n=1,2,3$. We also abbreviate $\phi(x_i,y_j,z_k)$ by $\phi_{i,j,k}$, similar for $\vec u,\vec v$ and $\psi$. 

        \begin{figure}[!ht]
        \begin{center}
            \includegraphics[height=4.5cm]{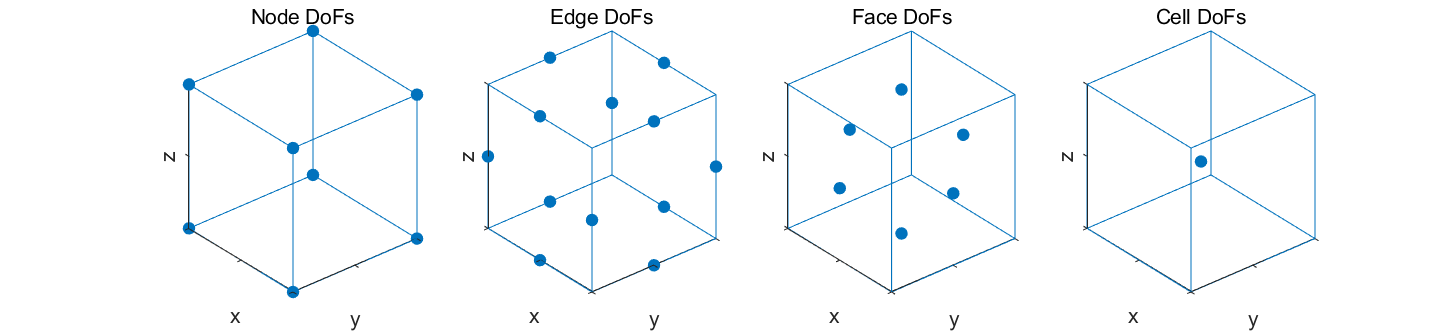}
  \end{center}         
   \caption{DoFs of scalar and vector grid functions on a single cell}
            \label{DoFs}
        \end{figure}

   We point out that $\Ncal_h,\Ecal_h,\Fcal_h$ and $\Ccal_h$ actually approximate certain function spaces related to $L^2$ de Rham complex in $\CC^3$:
 \begin{align}\label{l2_de_rham}
			0\rightarrow H^1(\Omega)\xrightarrow{\nabla_{\wnk}} \vec H(\text{curl})\xrightarrow{\nabla_{\wnk}\times}\vec H(\text{div})\xrightarrow{\nabla_{\wnk}\cdot}L^2(\Omega)\rightarrow0,
		\end{align}
  where the definitions of $H^1(\Omega),\vec H(\text{curl}), \vec H(\text{div})$ and more details can be found in \cite{arnold2000_multigrid}.
  The MFD projection operators that map function spaces in (\ref{l2_de_rham}) to (\ref{DoFs_grid}) are defined as: \begin{align*}
				&\pi_h^{\Ncal}:H^1(\Omega)\rightarrow \Ncal_h,\ \ \ \phi\mapsto\phi_h\text{\ by\ }(\phi_h)_{i,j,k}=\phi_{i,j,k},\\
				&\pi_h^{\Ecal}:H(\text{curl})\rightarrow\Ecal_h,\ \ \ \vec u\mapsto\vec u_h\text{\ by\ }\begin{cases}
				    (\vec u_h)_{i-\frac{1}{2},j,k}=(u_1)_{i-\frac{1}{2},j,k},\\
                    (\vec u_h)_{i,j-\frac{1}{2},k}=(u_2)_{i,j-\frac{1}{2},k}\\
                    (\vec u_h)_{i,j,k-\frac{1}{2}}=(u_3)_{i,j,k-\frac{1}{2}}
				\end{cases},\\
				&\pi_h^{\Fcal}:H(\text{div})\rightarrow\Fcal_h,\ \ \ \vec v\mapsto\vec v_h\text{\ by\ }\begin{cases}
				    (\vec v_h)_{i,j-
        \frac{1}{2},k-\frac{1}{2}}=(v_1)_{i,j-
        \frac{1}{2},k-\frac{1}{2}}\\
        (\vec v_h)_{i-\frac{1}{2},j,k-\frac{1}{2}}=(v_2)_{i-\frac{1}{2},j,k-\frac{1}{2}}\\
        (\vec v_h)_{i-\frac{1}{2},j-\frac{1}{2},k}=(v_3)_{i-\frac{1}{2},j-\frac{1}{2},k}
				\end{cases},\\
				&\pi_h^{\Ccal}:L^2(\Omega)\rightarrow\Ccal_h,\ \ \ \psi\mapsto\psi_h\text{\ by\ }(\psi_h)_{i-\frac{1}{2},j-\frac{1}{2},k-\frac{1}{2}}=\psi_{i-\frac{1}{2},j-\frac{1}{2},k-\frac{1}{2}}.
		\end{align*}

 \subsubsection{Second order discrete operators}

        Next, we construct a discrete version of operators in (\ref{shifted_nabla}): 
		\begin{align}
                \begin{aligned}
                    \nabla_{\wnk}\xrightarrow{\text{discretization}}\ &\grad_{\wnk}=\grad+\mathrm{i}\ \mathbb{I}_{\wnk}^{\Ncal\to\Ecal}:\ \Ncal_h\rightarrow\Ecal_h,\\ (\nabla_{\wnk}\times)\xrightarrow{\text{discretization}}\ &\curl_{\wnk}\ =\curl+\mathrm{i}\ \mathbb{I}_{\wnk}^{\Ecal\to\Fcal}:\ \Ecal_h\rightarrow\Fcal_h,\\ (\nabla_{\wnk}\cdot)\xrightarrow{\text{discretization}}\ &\dive_{\wnk}\ \ =\dive+\mathrm{i}\ \mathbb{I}_{\wnk}^{\Fcal\to\Ccal}:\ \Fcal_h\rightarrow\Ccal_h.
                \end{aligned}
		\end{align}		
		The operators $\grad,\curl,\dive$ approximate $\nabla, \nabla\times, \nabla\cdot$, while  $\mathbb{I}^{\Ncal\to\Ecal}_{\wnk},\mathbb{I}_{\wnk}^{\Ecal\to\Fcal},\mathbb{I}_{\wnk}^{\Fcal\to\Ccal}$  approximate the multipliers $\wnk,(\wnk\times),(\wnk\cdot)$, respectively. Their expressions are given by:
			\begin{align*}
				(\grad\vec\phi_h)_{i-\frac{1}{2},j,k}&=\frac{\phi_{i,j,k}-\phi_{i-1,j,k}}{h},\\
			    (\curl\vec u_h)_{i,j-\frac{1}{2},k-\frac{1}{2}}&=\frac{(u_3)_{i,j,k-\frac{1}{2}}-(u_3)_{i,j-1,k-\frac{1}{2}}}{h}-\frac{(u_2)_{i,j-\frac{1}{2},k}-(u_2)_{i,j-\frac{1}{2},k-1}}{h},\\
				(\dive\vec v_h)_{i-\frac{1}{2},j-\frac{1}{2},k-\frac{1}{2}}&=\frac{(v_1)_{i,j-\frac{1}{2},k-\frac{1}{2}}-(v_1)_{i-1,j-\frac{1}{2},k-\frac{1}{2}}}{h}+\frac{(v_2)_{i-\frac{1}{2},j,k-\frac{1}{2}}-(v_2)_{i-\frac{1}{2},j-1,k-\frac{1}{2}}}{h}\\
				&+\frac{(v_3)_{i-\frac{1}{2},j-\frac{1}{2},k}-(v_3)_{i-\frac{1}{2},j-\frac{1}{2},k-1}}{h},\\
                (\mathbb{I}_{\wnk}^{\Ncal\to\Ecal}\vec\phi_h)_{i-\frac{1}{2},j,k}&=\alpha_1\frac{\phi_{i-1,j,k}+\phi_{i,j,k}}{2},\\
			(\mathbb{I}_{\wnk}^{\Ecal\to\Fcal}\vec u_h)_{i,j-\frac{1}{2},k-\frac{1}{2}}&=\alpha_2\frac{(u_3)_{i,j-1,k-\frac{1}{2}}+(u_3)_{i,j,k-\frac{1}{2}}}{2}-\alpha_3\frac{(u_2)_{i,j-1,k-\frac{1}{2}}+(u_2)_{i,j,k-\frac{1}{2}}}{2},\\
			(\mathbb{I}_{\wnk}^{\Fcal\to\Ccal}\vec v_h)_{i-\frac{1}{2},j-\frac{1}{2},k-\frac{1}{2}}&=\alpha_1\frac{(v_1)_{i-1,j-\frac{1}{2},k-\frac{1}{2}}+(v_1)_{i,j-\frac{1}{2},k-\frac{1}{2}}}{2}+\alpha_2\frac{(v_2)_{i-\frac{1}{2},j-1,k-\frac{1}{2}}+(v_2)_{i-\frac{1}{2},j,k-\frac{1}{2}}}{2}\\
			&+\alpha_3\frac{(v_3)_{i-\frac{1}{2},j-\frac{1}{2},k-1}+(v_3)_{i-\frac{1}{2},j-\frac{1}{2},k}}{2}.
		\end{align*}
 Only part of the expressions are listed here. The rest can be deduced similarly.

 \subsubsection{Higher order discretization}
    Notice that $\grad_{\wnk},\curl_{\wnk},\dive_{\wnk}$ are essentially formed by dimension-wise divided differencing, where the 0th- and 1st-order derivatives in one dimension are approximated by the second-order finite difference approximation. If we substitute it with a high-order difference template, we will obtain a high-order MFD scheme.
    
       The $2k$ order finite difference approximations for the sufficiently smooth function $\phi$ and its derivative $\frac{d\phi}{dx}$ at $x_{j-\frac{1}{2}}$ are the following
       \begin{align}\label{2kfdm}
        (\frac{d\phi}{dx})_{j-\frac{1}{2}}\approx \frac{1}{h}\sum\limits_{s=1}^kc_l(\phi_{j+s-1}-\phi_{j-s}),\ \ \ \ \phi_{j-\frac{1}{2}}\approx\sum\limits_{s=1}^kd_l(\phi_{j+s-1}+\phi_{j-l})
    \end{align}
    where $(c_1,\cdots,c_k),\ (d_1,\cdots,d_k)$ are listed in Table \ref{FD_1D}.
    
    \begin{table}[h]
        \centering   \caption{Symmetric difference stencil in one dimension}
        \label{FD_1D}
  \footnotesize
        \begin{tabular}{|c|l|l|}
            \hline
            $k$ & $(c_1,\cdots,c_k)$ & $(d_1,\cdots,d_k)$ \\
            \hline
             1  & $c_1=1$  &  $d_1=\displaystyle\frac{1}{2}$ \\
            \hline
            2 & $(c_1,c_2)=\left(\displaystyle\frac{9}{8},-\frac{1}{24}\right)$ & $(d_1,d_2)=\left(\displaystyle\frac{9}{16},-\frac{1}{16}\right)$\\
            \hline
            3 & $(c_1,c_2,c_3)=\left(\displaystyle\frac{25}{64},-\frac{25}{384},\frac{3}{640}\right)$ & $(d_1,d_2,d_3)=\left(\displaystyle\frac{75}{128},-\frac{25}{256},\frac{3}{256}\right)$\\
            \hline
            4 & $(c_1,c_2,c_3,c_4)=\displaystyle\left(\frac{1225}{1024},-\frac{245}{3072},\frac{49}{5120},-\frac{5}{7168}\right)$ & $(d_1,d_2,d_3,d_4)=\left(\displaystyle\frac{1225}{2048},-\frac{245}{2048},\frac{49}{2048},-\frac{5}{2048}\right)$\\
            \hline
        \end{tabular}
       \end{table}

    \subsection{Matrix representation}
    \label{subse:m}
    \subsubsection{Matrix representation on simple lattices }
The discrete operators $\grad_{\wnk}$, $\curl_{\wnk}$, $\dive_{\wnk}$ have prominent matrix representations in the form of Kronecker product of circulant matrix and identities. Here $I_N$ denotes the $N\times N$ identity matrix. 
\begin{defn}[Circulant matrix]
    A circulant matrix generated by the vector $\vec a=(a_1,\cdots,a_N)^T\in\RR^{N\times 1}$ is denoted by $\text{Circ}(\vec a)$ with following arrangment of entries: 
    \begin{align}\label{circ_def}
        \text{Circ}(\vec a):=\begin{pmatrix}
            a_1 & a_2 & \cdots & a_{N-1} & a_N\\
            a_N & a_1 & a_2 & \ & a_{N-1}  \\
            \vdots & a_N & a_1 &\ddots &\vdots \\
            a_3 &\ &\ddots &\ddots & a_2 \\
            a_2 & a_3 &\cdots & a_N & a_1
        \end{pmatrix}.
    \end{align}
\end{defn}
 We denote $\mathbb{D}_1=\text{Circ}((-1,1,0,\cdots,0)^T),\ \mathbb{D}_0=\text{Circ}((\frac{1}{2},\frac{1}{2},0,\cdots,0)^T)$ and more explicitly,
		\begin{align}\label{stencils}
                \begin{aligned}
                    \mathbb{D}_1=\begin{pmatrix}
				-1&1&\ & \ &\  \\
				\ &-1&1& \ &\   \\
				\ &\ &\ \ddots &\ddots &\  \\
				\ &\ &\ &-1 &1 \\
				1 &\ &\ & &\ -1
			\end{pmatrix}\in\RR^{N\times N},\ \mathbb{D}_0=\begin{pmatrix}
			\frac{1}{2}&\frac{1}{2}&\ & \ &\  \\
			\ &\frac{1}{2}&\frac{1}{2}& \ &\   \\
			\ &\ &\ \ddots &\ddots &\  \\
			\ &\ &\ &\frac{1}{2} &\frac{1}{2} \\
			\frac{1}{2} &\ &\ & &\ \frac{1}{2}
		\end{pmatrix}\in\RR^{N\times N}.
                \end{aligned}			
		\end{align}
		Using the coefficients  in Table \ref{FD_1D}, notations of circulant matrices $\mathbb{D}_0,\mathbb{D}_1$ can be generalized by
    \begin{align}
        \begin{aligned}
        &\mathbb{D}_1^{(k)}:=\text{Circ}(\vec v_1^{(k)}),\ \ \vec v_1^{(k)}=(-c_1,c_1,c_2,\cdots,c_k,0,\cdots,0,-c_k,\cdots,-c_2)^T\in\RR^{N\times 1},\\
        &\mathbb{D}_0^{(k)}:=\text{Circ}(\vec v_0^{(k)}),\ \ \vec v_0^{(k)}=(d_1,d_1,d_2,\cdots,d_k,0,\cdots,0,d_k,\cdots,d_2)^T\in\RR^{N\times 1}.
        \end{aligned}
    \end{align}
		Then we define the  discrete operator 
		\begin{align}\label{discre}\Dcal_i:=K_i+\mathrm{i} L_i,  \; i=1,2,3
		    \end{align}
		 by the Kronecker product ($\otimes$):
		\begin{align}\label{blocks}
			\begin{aligned}
				&K_1=I_{N}\otimes I_{N}\otimes (\mathbb{D}_1/h),\ K_2=I_{N}\otimes(\mathbb{D}_1/h)\otimes I_{N},\ K_3=(\mathbb{D}_1/h)\otimes I_{N}\otimes I_{N}.\\
                    &L_1=I_{N}\otimes I_{N}\otimes(\alpha_1\mathbb{D}_0),\ \ L_2=I_{N}\otimes(\alpha_2\mathbb{D}_0)\otimes I_{N},\ \ L_3=(\alpha_3\mathbb{D}_0)\otimes I_{N}\otimes I_{N}.
			\end{aligned}			  
		\end{align}
By replacing $\mathbb{D}_0,\mathbb{D}_1$ by $\mathbb{D}_0^{(k)},\mathbb{D}_1^{(k)}$ in (\ref{blocks}), it arrives at a $2k$-th order MFD discretization for (\ref{discre}). In following discussions, we simply write $\mathbb{D}_1:=\mathbb{D}_1^{(k)}$ and $\mathbb{D}_0:=\mathbb{D}_0^{(k)}$.

\begin{lem}\label{abelian_kron}
        If the matrices $A_1,\cdots,A_n$ and $B_1,\cdots,B_n$ are such that $A_iB_i=B_iA_i$ for $1\leq i\leq n$, then $A_1\otimes\cdots\otimes A_n$ and $B_1\otimes\cdots\otimes B_n$ are commutative.   
 \end{lem}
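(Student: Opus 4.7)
The plan is to reduce the claim to the mixed-product (compatibility) property of the Kronecker product, which states that $(A_1\otimes\cdots\otimes A_n)(B_1\otimes\cdots\otimes B_n)=(A_1B_1)\otimes\cdots\otimes(A_nB_n)$ whenever the pairwise products $A_iB_i$ are defined. This identity is the only nontrivial ingredient; once it is in hand, the commutativity hypothesis on each factor transfers directly to the tensor product.

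Concretely, I would first recall or briefly justify the mixed-product identity (either by citing a standard reference on matrix analysis or by a one-line induction on $n$ reducing to the classical two-factor case $(A\otimes B)(C\otimes D)=(AC)\otimes(BD)$, which is verified by comparing block entries). Then I would write
\begin{align*}
(A_1\otimes\cdots\otimes A_n)(B_1\otimes\cdots\otimes B_n)
&=(A_1B_1)\otimes\cdots\otimes(A_nB_n)\\
&=(B_1A_1)\otimes\cdots\otimes(B_nA_n)\\
&=(B_1\otimes\cdots\otimes B_n)(A_1\otimes\cdots\otimes A_n),
\end{align*}
where the middle step uses the hypothesis $A_iB_i=B_iA_i$ for each $i$, and the outer equalities apply the mixed-product property in two directions.

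There is essentially no obstacle here, since the lemma is a direct corollary of a well-known algebraic identity; the only thing to be careful about is compatibility of matrix sizes, but in the applications of this lemma inside the paper (to the matrices $K_i$ and $L_i$ in \eqref{blocks}, which are built from $N\times N$ circulant and identity factors) all tensor factors are square of the same size $N$, so the mixed-product property applies without reservation. I would close with a one-sentence remark that, in particular, any two matrices of the form $C_1\otimes C_2\otimes C_3$ with all $C_i$ circulant commute, since circulant matrices of the same size commute; this is the way the lemma is invoked later to diagonalize the discrete operators simultaneously by FFT.
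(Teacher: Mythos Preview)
Your proposal is correct and follows exactly the paper's approach: invoke the mixed-product identity $(A_1\otimes\cdots\otimes A_n)(B_1\otimes\cdots\otimes B_n)=(A_1B_1)\otimes\cdots\otimes(A_nB_n)$ (the paper cites \cite[Section 4.5.5]{golub2013matrix}) and then use $A_iB_i=B_iA_i$ to swap the factors. Your write-up is slightly more explicit than the paper's two-line version, but the argument is identical.
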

    \begin{proof}
        By  the mixed product property, which  
        can be found  in \cite[Section 4.5.5]{golub2013matrix}, 
        \begin{align*}
            (A_1\otimes\cdots\otimes A_n)(B_1\otimes\cdots\otimes B_n)=(A_1B_1)\otimes\cdots\otimes(A_nB_n).
        \end{align*}
    Since $A_iB_i=B_iA_i$, the conclusion is proved.
    \end{proof}
    
    Since each matrix is commutative with the identity, the following proposition holds.
    \begin{prop}\label{commutativity_of_K}
        The matrices in (\ref{blocks}) satisfy
         $$K_iK_j=K_jK_i,\; L_iK_j=K_jL_i, \; L_iL_j=L_jL_i, \quad i,j\in\{1,2,3\}.$$
    \end{prop}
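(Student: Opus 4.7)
The plan is to deduce Proposition \ref{commutativity_of_K} as a direct corollary of Lemma \ref{abelian_kron}, once we establish the slot-wise commutativity of the three-fold Kronecker factors. Inspecting (\ref{blocks}), each of $K_1,K_2,K_3,L_1,L_2,L_3$ is a Kronecker product of three matrices in $\RR^{N\times N}$, where each factor is one of $I_N$, $\mathbb{D}_1/h$, or a scalar multiple of $\mathbb{D}_0$ (or of their higher-order analogues $\mathbb{D}_1^{(k)}/h$, $\alpha_i\mathbb{D}_0^{(k)}$). Thus, by Lemma \ref{abelian_kron}, it suffices to verify that in each of the three tensor slots, the two factors one is comparing commute.

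First I would record the elementary fact that any matrix commutes with the identity $I_N$, which handles every slot where at least one of the two factors is $I_N$. The remaining case is when both factors are nontrivial, i.e., both are scalar multiples of $\mathbb{D}_0$ or of $\mathbb{D}_1$. Here I would invoke the fundamental property that all circulant matrices of size $N\times N$ form a commutative algebra: since any $\text{Circ}(\vec a)$ is a polynomial in the basic cyclic shift $S=\text{Circ}((0,1,0,\cdots,0)^T)$, any two circulant matrices commute. Equivalently, they are simultaneously diagonalized by the discrete Fourier transform matrix. Applied to our setting, this gives $\mathbb{D}_0\mathbb{D}_1=\mathbb{D}_1\mathbb{D}_0$, $\mathbb{D}_0\mathbb{D}_0=\mathbb{D}_0\mathbb{D}_0$, and $\mathbb{D}_1\mathbb{D}_1=\mathbb{D}_1\mathbb{D}_1$, which covers every remaining slot-wise pairing. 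The higher-order templates $\mathbb{D}_0^{(k)}$, $\mathbb{D}_1^{(k)}$ are also circulant by construction, so the same argument applies uniformly.

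Combining these two observations, for any pair chosen from $\{K_1,K_2,K_3,L_1,L_2,L_3\}$, the three slot-wise commutators vanish, so Lemma \ref{abelian_kron} yields commutativity of the Kronecker products. This simultaneously proves all three families of identities $K_iK_j=K_jK_i$, $L_iK_j=K_jL_i$, and $L_iL_j=L_jL_i$ for $i,j\in\{1,2,3\}$.

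I do not expect any serious obstacle: the only substantive ingredient is the commutativity of circulant matrices of the same size, which is a standard fact and can be stated in one line. The bookkeeping step is merely to enumerate, for each unordered pair of matrices in $\{K_1,\ldots,L_3\}$, which slot holds identities and which slot holds circulants, but thanks to Lemma \ref{abelian_kron} this does not actually need to be carried out case-by-case.
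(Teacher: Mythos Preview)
Your proposal is correct and follows essentially the same route as the paper: apply Lemma \ref{abelian_kron} after checking slot-wise commutativity of the three Kronecker factors. The paper's own justification is the single sentence ``Since each matrix is commutative with the identity, the following proposition holds,'' and you are in fact a bit more careful---the identity argument alone covers $i\neq j$, but for $L_iK_i=K_iL_i$ the $i$-th slot carries the pair $(\alpha_i\mathbb{D}_0,\mathbb{D}_1/h)$, where one genuinely needs the commutativity of circulant matrices that you invoke.
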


Matrix representations of $\grad_{\wnk},\curl_{\wnk},\dive_{\wnk}$ follow by
		\begin{align}\label{discrete_2nd_block}
                \grad_{\wnk}=\begin{pmatrix}
				\Dcal_1\\ \Dcal_2\\ \Dcal_3
			\end{pmatrix},\ \ \ \curl_{\wnk}=\begin{pmatrix}
			\ & -\Dcal_3 & \Dcal_2\\ \Dcal_3 & \ & -\Dcal_1\\ -\Dcal_2 & \Dcal_1 &\
		\end{pmatrix},\ \ \ \dive_{\wnk}=\begin{pmatrix}
		    \Dcal_1 & \Dcal_2 & \Dcal_3
		\end{pmatrix}.		
		\end{align}
With a slight abuse of the notations, we will use the same notations to stand for the discrete operators and their representation in matrices in the following. 

 Proposition \ref{commutativity_of_K} implies that
  $$\curl_{\wnk}\ \grad_{\wnk}=0, \quad\dive_{\wnk}\ \curl_{\wnk}=0,$$
 which verifies a discrete version of the de Rham complex (\ref{l2_de_rham}) 
    \begin{align}\label{discrete_de_Rham}
        0\rightarrow \Ncal_h\xrightarrow{\grad_{\wnk}} \Ecal_h\xrightarrow{\curl_{\wnk}}\Fcal_h\xrightarrow{\dive_{\wnk}}\Ccal_h\rightarrow0.
    \end{align}
Thus the condition  \eqref{assumpt1} in Proposition \ref{penalty_aux_correctness} is satisfied. 

\subsubsection{Matrix representation on general lattices}
    So far, we have been discussing the case where $\vec a_n=\vec e_n$. However, it is important to generalize, as not all primitive cells extend their structures along $\{e_n\}$, such as face-centered and body-centered lattices, whose mathematical expressions will be provided in Section \ref{se:num}. Therefore, we will introduce the following coordinate change:
    $$\frac{\p}{\p x_i}=\sum\limits_{j=1}^3b_{ji}\frac{\p}{\p y_j}, \quad \hat K_i=\sum\limits_{j=1}^3b_{ji}K_j \text{\ and\ } \hat\Dcal_i:=\hat K_i+\mathrm{i} L_i.$$ We then replace $\Dcal_i$ in (\ref{discrete_2nd_block}) by $\hat\Dcal_i$ and abbreviate the notation by: 
    \begin{align}\label{unified_mat}
       \grad_{\wnk}=\begin{pmatrix}
				\hat\Dcal_1\\ \hat\Dcal_2\\ \hat\Dcal_3
			\end{pmatrix},\ \ \ \curl_{\wnk}=\begin{pmatrix}
			\ & -\hat\Dcal_3 & \hat\Dcal_2\\ \hat\Dcal_3 & \ & -\hat\Dcal_1\\ -\hat\Dcal_2 & \hat\Dcal_1 &\
		\end{pmatrix},\ \ \ \dive_{\wnk}=\begin{pmatrix}
		    \hat\Dcal_1 & \hat\Dcal_2 & \hat\Dcal_3
		\end{pmatrix}.
    \end{align}
   Since $\hat K_i$ are linear combinations of $K_i$, Propositions \ref{commutativity_of_K}  also holds for $\hat K_i$.
    Therefore we also have 
    $$\curl_{\wnk}\ \grad_{\wnk}=0, \quad\dive_{\wnk}\ \curl_{\wnk}=0$$
    and (\ref{discrete_de_Rham}) is still valid for general lattices. 
    

    \subsection{Derived operators}
    
		Since the rotated magnetic field $\vec v$ induces face-centered DoFs, it requires the definition of a discrete curl operator on $\Fcal_h.$ It is apparent to give an explicit, geometric definition as shown in \cite{null_free_JD}. However, due to the existence of complex components $\mathrm{i}\wnk$, it would be more accurate to define the derived operators corresponding to $\grad_{\wnk},\curl_{\wnk},\dive_{\wnk}$. The derived operators are induced by adjoint operators of $\nabla_{\wnk}$'s as illustrated in \cite{MFDM2016}. We recall the definition of adjoints:
\begin{defn}[Adjoints]
    Given two inner product spaces $(X,\langle\cdot,\cdot\rangle_X), (Y,\langle\cdot,\cdot\rangle_Y)$ and a linear operator $F:X\rightarrow Y,$ the adjoint operator of $F$, denoted by $F^*: Y\rightarrow X$, is such that
        \begin{align}
            \langle Fx,y\rangle_Y=\langle x,F^*y\rangle_X,\ \ \text{for any\ }x\in X,y\in Y.
        \end{align}
\end{defn}
 If both $X,Y$ are Euclidean spaces with finite dimensions and standard scalar products,  $F^*$ is a matrix and $F^*=F'$. In particular, $F^*=F'$ for $F=\grad_{\wnk},\ \curl_{\wnk}, \dive_{\wnk}.$

As is proved in \cite{lu2017discontinuous} there holds:
\begin{align}\label{adjoint_shifted_nabla}
    (\nabla_{\wnk})^*=-\nabla_{\wnk}\cdot,\ \ (\nabla_{\wnk}\times)^*=\nabla_{\wnk}\times,\ \ (\nabla_{\wnk}\cdot)^*=-\nabla_{\wnk}.
\end{align}
  The derived operators denoted by $\widetilde\grad_{\wnk},\ \widetilde\curl_{\wnk},\ \widetilde\dive_{\wnk}$ are defined in a similar format as (\ref{adjoint_shifted_nabla}):
\begin{align}
    \begin{aligned}
        \grad_{\wnk}^*=-\widetilde\dive_{\wnk},\ \ \curl_{\wnk}^*=\widetilde\curl_{\wnk},\ \ \dive_{\wnk}^*=-\widetilde\grad_{\wnk},
    \end{aligned}    
\end{align}
which can be rewritten as 
\begin{align}\label{derived_uniform_grid}
        \begin{aligned}
            &\widetilde\grad_{\wnk}=-\dive_{\wnk}^*=-\dive_{\wnk}':\Ccal_h\rightarrow\Fcal_h,\\
            &\widetilde\curl_{\wnk}=\curl_{\wnk}^*=\curl_{\wnk}':\Fcal_h\rightarrow\Ecal_h,\\ &\widetilde\dive_{\wnk}=-\grad_{\wnk}^*=-\grad_{\wnk}':\Ecal_h\rightarrow\Ncal_h.
        \end{aligned}            
        \end{align}

Furthermore, we have the following result.
   \begin{prop}\label{commutativity_of_K_prime}
        Since $K_iK_j^T=K_j^TK_i,\ L_iL_j^T=L_j^TL_i$ and $K_iL_j^T=L_j^TK_i,$ thus 
        $$\Dcal_i\Dcal_j'=\Dcal_j'\Dcal_i.$$
    \end{prop}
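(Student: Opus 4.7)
The plan is to reduce $\Dcal_i \Dcal_j' = \Dcal_j' \Dcal_i$ to the three pairwise commutation relations asserted in the hypothesis, and then establish those three relations from the Kronecker product structure (\ref{blocks}) via Lemma \ref{abelian_kron}. First, since $K_i$ and $L_i$ are real matrices, $\Dcal_j' = K_j^T - \mathrm{i} L_j^T$. Expanding,
\begin{align*}
\Dcal_i \Dcal_j' &= K_i K_j^T + L_i L_j^T + \mathrm{i}\bigl(L_i K_j^T - K_i L_j^T\bigr),\\
\Dcal_j' \Dcal_i &= K_j^T K_i + L_j^T L_i + \mathrm{i}\bigl(K_j^T L_i - L_j^T K_i\bigr).
\end{align*}
Matching real parts uses $K_i K_j^T = K_j^T K_i$ and $L_i L_j^T = L_j^T L_i$ directly. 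Matching imaginary parts uses $K_i L_j^T = L_j^T K_i$ (the stated relation) together with its transpose $L_j K_i^T = K_i^T L_j$, which after relabeling gives $L_i K_j^T = K_j^T L_i$; these two identities make the two imaginary parts coincide.

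Next I would justify the three hypothesized commutations. Each of $K_i$, $L_i$, $K_j^T$, $L_j^T$ is a Kronecker product of three factors in which the non-identity factor sits in exactly one of the three slots (slot $i$ or slot $j$ respectively), and the non-identity factor is drawn from $\{\mathbb{D}_1/h,\, \mathbb{D}_1^T/h,\, \alpha_i\mathbb{D}_0,\, \alpha_j\mathbb{D}_0^T\}$. By Lemma \ref{abelian_kron}, it suffices to verify commutativity slot by slot. In any slot where at least one of the two factors is $I_N$, commutativity is automatic, so the only nontrivial case is the slot where both factors are non-identity, which can occur only when $i=j$. In that single coinciding slot we are reduced to showing
\[
\mathbb{D}_1 \mathbb{D}_1^T = \mathbb{D}_1^T \mathbb{D}_1, \qquad \mathbb{D}_0 \mathbb{D}_0^T = \mathbb{D}_0^T \mathbb{D}_0, \qquad \mathbb{D}_1 \mathbb{D}_0^T = \mathbb{D}_0^T \mathbb{D}_1,
\]
for the building blocks $\mathbb{D}_1 = \mathbb{D}_1^{(k)}$ and $\mathbb{D}_0 = \mathbb{D}_0^{(k)}$.

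The main obstacle is this last step, but it follows from a standard structural fact: all $N \times N$ circulant matrices form a commutative algebra, and the transpose of a circulant matrix is again circulant (it is generated by the reversed shift of the same vector). Consequently any two circulant matrices, and the transposes of any two circulant matrices, commute — this can be seen either directly from the definition (\ref{circ_def}) of $\text{Circ}(\cdot)$ or by noting that every circulant is simultaneously diagonalized by the discrete Fourier matrix. Since $\mathbb{D}_1^{(k)}$ and $\mathbb{D}_0^{(k)}$ are both circulant by construction, the three identities above hold, completing the verification of the hypotheses and hence of $\Dcal_i \Dcal_j' = \Dcal_j' \Dcal_i$.
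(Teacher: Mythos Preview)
Your proof is correct and follows essentially the same architecture as the paper's: reduce via the Kronecker structure and Lemma~\ref{abelian_kron} to the case $i=j$, where the question becomes commutation of a circulant building block with the transpose of another. The differences are cosmetic but worth noting. You make explicit the expansion of $\Dcal_i\Dcal_j'$ and $\Dcal_j'\Dcal_i$ into real and imaginary parts, which the paper leaves implicit in the word ``thus''. Conversely, for the $i=j$ step the paper carries out a direct index computation showing $\text{Circ}(\vec a)\,\text{Circ}(\vec b)^T=\text{Circ}(\vec b)^T\,\text{Circ}(\vec a)$ for arbitrary $\vec a,\vec b$, whereas you invoke the structural fact that circulants form a commutative algebra closed under transposition (equivalently, simultaneous diagonalization by the DFT matrix, as in (\ref{diag_of_circ})). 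Your route is shorter and arguably more transparent; the paper's is self-contained and does not rely on outside facts about circulants. Either is fully adequate here.
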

    \begin{proof}
        When $i\neq j$, the conclusion follows by Lemma \ref{abelian_kron}. If $i=j$, it suffices to prove: $\text{Circ}(\vec a)\text{Circ}(\vec b)^T=\text{Circ}(\vec b)^T\text{Circ}(\vec a)$ for any $\vec a,\vec b\in\RR^{N\times 1}.$

        We define the right shift operator $\tau$  by $\tau(a_1,\cdots,a_N)=(a_N,a_1,\cdots,a_{N-1})$ and reverse supscript ``$-1$" by $(a_1,\cdots,a_N)^{-1}=(a_N,a_{N-1},\cdots,a_1)$. We also impose periodicity on the index: $a_{i\pm N}=a_i$. The definition of circulant matrix (\ref{circ_def}) can be rewritten by
        \begin{align*}
            \text{Circ}(\vec a)=(\vec a,\tau\vec a,\cdots,\tau^{N-1}\vec a)^T=(\tau(\vec a^{-1}),\tau^2(\vec a^{-1}),\cdots,\vec a^{-1}).
        \end{align*}
        Then we get
        \begin{align*}
            &[\text{Circ}(\vec a)\text{Circ}(\vec b)^T]_{ij}=(\tau^{i-1} \vec a)^T(\tau^{j-1}\vec b)=(\tau^{i-j}\vec a)^T\vec b,\\
            &[\text{Circ}(\vec b)^T\text{Circ}(\vec a)]_{ij}=(\tau^i(\vec b^{-1}))^T(\tau^j(\vec a^{-1}))=(\tau^{i-j}(\vec b^{-1}))^T\vec a^{-1},
        \end{align*}
        and
        \begin{align*}
            (\tau^{i-j}(\vec b^{-1}))^T\vec a^{-1}&=\sum\limits_{l=1}^N(\vec b^{-1})_{l+i-j}(\vec a^{-1})_l=\sum\limits_{l=1}^Nb_{N+1-(l+i-j)}a_{N+1-l}\\
            &=\sum\limits_{l=1}^Nb_{l-(i-j)}a_l=\sum\limits_{l=1}^Na_{l+i-j}b_l=(\tau^{i-j}\vec a)^T\vec b,
        \end{align*}
     which implies  the $(i,j)$-component of $\text{Circ}(\vec b)^T\text{Circ}(\vec a)$ and $\text{Circ}(\vec a)\text{Circ}(\vec b)^T$ are equal.
    \end{proof}

 Then we have the following corollary of Proposition \ref{commutativity_of_K_prime}. 
    \begin{cor}\label{discrete_vec_lap_mat}
        Denoting
        \begin{align}
            \begin{aligned}
                & \vec L_{\wnk}:=\curl_{\wnk}\widetilde\curl_{\wnk}-\widetilde\grad_{\wnk}\dive_{\wnk}=\curl_{\wnk}\curl_{\wnk}'+\dive_{\wnk}'\dive_{\wnk},\\ 
                & L_{\wnk}:=-\dive_{\wnk}\widetilde\grad_{\wnk}=\dive_{\wnk}\dive_{\wnk}',
            \end{aligned}
        \end{align}
        then we have 
        $$\vec L_{\wnk}=\text{diag}(L_{\wnk},L_{\wnk},L_{\wnk}).$$   
    \end{cor}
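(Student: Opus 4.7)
The plan is a direct block-matrix computation, using the commutativity relations already at hand. Write $\vec L_{\wnk}$ as a $3\times 3$ block matrix by substituting the representations of $\curl_{\wnk}$, $\dive_{\wnk}$ from (\ref{unified_mat}) into $\curl_{\wnk}\curl_{\wnk}' + \dive_{\wnk}'\dive_{\wnk}$. The second term $\dive_{\wnk}'\dive_{\wnk}$ is the rank-one-style block matrix whose $(i,j)$ block is $\hat\Dcal_i'\hat\Dcal_j$. The first term $\curl_{\wnk}\curl_{\wnk}'$ is the familiar skew cross-product square, whose $(i,j)$ block expands to $\sum_{k\ne i}\hat\Dcal_k\hat\Dcal_k'$ on the diagonal ($i=j$) and to $-\hat\Dcal_j\hat\Dcal_i'$ off the diagonal ($i\ne j$).

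Next I would invoke the commutativity relation $\hat\Dcal_i\hat\Dcal_j' = \hat\Dcal_j'\hat\Dcal_i$. This follows from Proposition \ref{commutativity_of_K_prime} applied to each $\Dcal_i$ together with the fact that $\hat\Dcal_i = \sum_j b_{ji}K_j + \mathrm{i} L_i$ is a (complex) linear combination of the $K_j$ and $L_i$, so the commutation lifts by linearity. With this, the off-diagonal block becomes
\begin{equation*}
(\vec L_{\wnk})_{i,j} = -\hat\Dcal_j\hat\Dcal_i' + \hat\Dcal_i'\hat\Dcal_j = -\hat\Dcal_i'\hat\Dcal_j + \hat\Dcal_i'\hat\Dcal_j = 0, \qquad i\ne j,
\end{equation*}
and the diagonal block becomes
\begin{equation*}
(\vec L_{\wnk})_{i,i} = \sum_{k\ne i}\hat\Dcal_k\hat\Dcal_k' + \hat\Dcal_i'\hat\Dcal_i = \sum_{k=1}^3 \hat\Dcal_k'\hat\Dcal_k,
\end{equation*}
after moving the remaining $\hat\Dcal_k\hat\Dcal_k'$ factors through the same commutation. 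Finally, compute $L_{\wnk} = \dive_{\wnk}\dive_{\wnk}' = \sum_{k=1}^3 \hat\Dcal_k\hat\Dcal_k' = \sum_{k=1}^3 \hat\Dcal_k'\hat\Dcal_k$, which matches the diagonal block expression and finishes the identification $\vec L_{\wnk} = \diag(L_{\wnk}, L_{\wnk}, L_{\wnk})$.

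There is really no serious obstacle here; the only thing to be careful about is bookkeeping of signs in the cross-product layout of $\curl_{\wnk}\curl_{\wnk}'$ and making sure the commutativity is invoked in the correct direction so that the diagonal and off-diagonal blocks align as claimed. One could alternatively present the argument more concisely by noting that $\curl_{\wnk}\curl_{\wnk}' + \dive_{\wnk}'\dive_{\wnk}$ is the discrete analogue of the vector identity $\nabla\times\nabla\times - \nabla(\nabla\cdot) = -\Delta$ acting componentwise, and the commutativity of the building blocks $\hat\Dcal_i,\hat\Dcal_j'$ is precisely what is needed to make this identity hold at the discrete level.
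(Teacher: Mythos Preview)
Your argument is correct and is exactly the verification the paper has in mind: the corollary is stated without proof immediately after Proposition~\ref{commutativity_of_K_prime}, and the block computation you carry out, together with the commutation $\hat\Dcal_i\hat\Dcal_j'=\hat\Dcal_j'\hat\Dcal_i$ (which indeed lifts from Proposition~\ref{commutativity_of_K_prime} by linearity in the $\hat K_i$), is precisely how one makes the claimed identity explicit.
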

This corollary is valid for general lattices, and $\vec L_{\wnk}$ will be adopted as the preconditioner for solving the auxiliary scheme \eqref{Ker_comp}.
    
        
    \subsection{A complete MFD formulation of (\ref{curlcurl_y})}
    To finish the discrete formulation of (\ref{curlcurl_y}), it remains to deal with the multiplier $\varepsilon^{-1}$ arising from the dielectric material. Since $\varepsilon^{-1}$ should be imposed on edge-DoFs, the effect of $\varepsilon^{-1}$ is assumed to be a diagonal matrix, denoted by $M_0$  with positive entries.  We follow the constructions in \cite{lyu2021fame}.
    
As mentioned in Section \ref{se:model}, $\varepsilon=\varepsilon_1$ inside the material and $\varepsilon=\varepsilon_0$ outside. We suppose the $s$-th entry of $M_0$, denoted by $\beta_{ss}$, to be related to the edge-DoF centered at point $\vec r_s,$ then 
    \begin{align*}
        \beta_{ss}=
        \left\{
        \begin{aligned}
            &\varepsilon_1,\ \text{if}\ \vec r_s\text{\ is inside the material,}\\
        &\varepsilon_0,\ \text{otherwise.}
        \end{aligned}        
        \right.
    \end{align*}
If $\vec r_s$ is located right on the interface (singularity of $\varepsilon$), then $\beta_{ss}$ takes the harmonic average of the evaluation of $\varepsilon^{-1}$ at four neighborhood cells. For example, if $\vec r_s=(x_{i-\frac{1}{2}},y_j,z_k),$ then 
    \begin{align}
\beta_{ss}=4/(\varepsilon_{i-\frac{1}{2},j-\frac{1}{2},k-\frac{1}{2}}+\varepsilon_{i-\frac{1}{2},j+\frac{1}{2},k-\frac{1}{2}}+\varepsilon_{i-\frac{1}{2},j-\frac{1}{2},k+\frac{1}{2}}+\varepsilon_{i-\frac{1}{2},j+\frac{1}{2},k+\frac{1}{2}}).             
    \end{align}
 Centers of cells $(x_{i-\frac{1}{2}},y_{j-\frac{1}{2}},z_{k-\frac{1}{2}})$ never insect with the interface in our numerical examples. 
 
 The mimetic finite difference scheme of (\ref{curlcurl_y}) is formulated as follows:
    \begin{align}\label{original_FD_formulation}   
    \left\{
        \begin{aligned}            
        &(\Acal M_0\widetilde\Acal)\vec v_h=\omega_h^2\vec v_h,\\
        &\Bcal\vec v_h=\vec 0,
        \end{aligned}
    \right. 
    \end{align}
    where $\Acal=\curl_{\wnk}, \Bcal=\dive_{\wnk}$ are defined by (\ref{discrete_2nd_block}) or (\ref{unified_mat}) and $\widetilde\Acal=\Acal'$  due to (\ref{derived_uniform_grid}).
    The following proposition indicates the constraint $\Bcal\vec v_h=0$ is redundant when  $\vec v_h\neq \vec 0$.
    
    \begin{prop}
        As long as $\Bcal\Acal=0$ holds and $\vec v_h$ is a nontrivial eigenvector of $\Acal M_0\Acal',$ then discrete divergence-free condition $\Bcal\vec v_h=0$ holds.
    \end{prop}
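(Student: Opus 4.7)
The plan is to apply $\Bcal$ to both sides of the eigenvalue equation and invoke the identity $\Bcal\Acal=0$, which was established via Proposition \ref{commutativity_of_K} as the ``$\dive_{\wnk}\circ\curl_{\wnk}=0$'' link of the discrete de Rham complex (\ref{discrete_de_Rham}). The argument is essentially a one-liner; the only subtlety is the interpretation of the word ``nontrivial''.

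Concretely, starting from $\Acal M_0\Acal'\vec v_h=\omega_h^2\vec v_h$, I would left-multiply both sides by $\Bcal$ to obtain
\[
\Bcal\Acal M_0\Acal'\vec v_h=\omega_h^2\,\Bcal\vec v_h.
\]
Since $\Bcal\Acal=0$ the left-hand side vanishes identically, leaving $\omega_h^2\,\Bcal\vec v_h=\vec 0$, and therefore $\Bcal\vec v_h=\vec 0$ provided $\omega_h^2\neq 0$.

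To justify the nonzero eigenvalue assumption, I would clarify that ``nontrivial'' here should be read as corresponding to a nonzero eigenvalue $\omega_h^2\neq 0$. Indeed, by Lemma \ref{lm:kera} the zero eigenspace of $\Acal M_0\Acal'$ is exactly $\ker\Acal'$, and this space generally contains vectors that violate the divergence-free constraint (any nonzero element of $\image\Bcal'\subset\ker\Acal'$, for instance). These are precisely the spurious zero-frequency modes that the kernel compensation method of Section \ref{se:kcm} is designed to filter out, so excluding them is consistent with the purpose of the scheme.

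There is no significant technical obstacle: once $\Bcal\Acal=0$ is available from Proposition \ref{commutativity_of_K}, the proof is a single application of the eigenvalue equation followed by cancellation of $\omega_h^2$. The conceptual content of the proposition is simply that the divergence-free constraint in (\ref{original_FD_formulation}) is automatically enforced by the discrete curl-curl eigenproblem for physical (nonzero-frequency) eigenmodes, and is therefore redundant when solving for such modes.
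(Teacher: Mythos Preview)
Your proof is correct and relies on the same two ingredients as the paper's---the identity $\Bcal\Acal=0$ and the interpretation of ``nontrivial'' as $\omega_h^2\neq 0$---but the paper arranges the argument differently. Rather than applying $\Bcal$ directly to the eigenvalue equation and cancelling $\omega_h^2$, the paper first invokes Lemma~\ref{lm:kera} to identify $\ker(\Acal M_0\Acal')=\ker(\Acal')$, then argues via orthogonal complements that any eigenvector with nonzero eigenvalue lies in
\[
\image(\Acal M_0\Acal')=\ker(\Acal M_0\Acal')^\perp=\ker(\Acal')^\perp=\image(\Acal),
\]
from which $\Bcal\vec v_h=0$ follows by $\Bcal\Acal=0$. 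Your route is the more elementary of the two---a one-line computation rather than a chain of subspace identifications---while the paper's version has the minor expository benefit of making explicit that the physical eigenvectors live in $\image(\Acal)$, which ties in with the decomposition (\ref{ortho_decom}) used elsewhere in Section~\ref{se:kcm}.
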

    \begin{proof}
        For any $\vec x\in \CC^{3N^3},\ \Acal M_0\Acal'\vec x=0$ implies 
       $\vec x'\Acal M_0\Acal'\vec x=0.$  Then $\Acal'\vec x=0$ follows by $M_0$ is positive diagonal.  Therefore we have that $\Acal'\vec x=0$ is equivalent to $\Acal M_0\Acal'\vec x=0$ as in Lemma \ref{lm:kera}.

        Now each nontrivial eigenvector $\vec v_h$ of $\Acal M_0\Acal'$ is such that 
        \begin{align}\label{image_rel}
            \vec v_h\in \image(\Acal M_0\Acal')=\ker(\Acal M_0\Acal')^{\perp}=\ker(\Acal')^{\perp}=\image(\Acal).
        \end{align}
        By the condition $\Bcal\Acal=0$, it follows that $\Bcal\vec v_h=0.$
    \end{proof}

    \subsection{Revisit to the kernel compensation auxiliary scheme (\ref{Ker_comp})}
    The condition \eqref{assumpt1} in Proposition \ref{penalty_aux_correctness} has been verified by the compatible MFD discretization in Section \ref{subse:m}.  The following remark explains the correctness of the assumption  (\ref{assumpt2}). Thus the kernel compensation method works, and we summarize the whole procedure in Algorithm \ref{alg:kcm}.

    \begin{algorithm}[h]
        \caption{Auxiliary scheme with recomputing}\label{alg:kcm}
        \begin{algorithmic}[1]
            \STATE Compute the first $m$ eigenpairs $\{(\lambda_i,\vec x_i)\}_{i=1}^m$ of $\Acal M_0\Acal'+\gamma \Bcal'\Bcal.$
            \STATE Compute $\lambda_{i,\text{re}}=\|M_0^{\frac{1}{2}}\Acal'\vec x_i\|^2/\|\vec x_i\|^2.$
            \IF{$\lambda_{i,\text{re}}=\lambda_{i}$ for $i=1,\cdots,m$}
            \STATE Return $\{(\lambda_i,\vec x_i)\}_{i=1}^m.$
            \ELSE
            \STATE $\%$ \textit{spurious eigenvalues occur}
            \STATE Enlarge $\gamma>0$ (usually set $\gamma:=2\gamma$) and goto line 1.
            \ENDIF
        \end{algorithmic}
    \end{algorithm}
    
   \begin{rmk} 
          When $\vec \alpha\neq 0$ we can see that $\lambda_1^\Bcal\approx\|\wnk\|^2,$ since $\Bcal\Bcal'$ approximates $(\nabla_{\wnk}\cdot)(\nabla_{\wnk}\cdot)^*=-\Delta+\|\wnk\|^2$. Therefore, $\dim\Hcal=0$ and $\gamma=\|\wnk\|^{-2}$.    When $\wnk=0$, according to Corollary \ref{discrete_vec_lap_mat}, we have $\dim\Hcal=3\dim\ker\Bcal'.$ Since $\Bcal'$ approximates $-\nabla$ whose nullity equals to 1, therefore $\dim\Hcal=3$. Meanwhile, $\lambda_1^\Bcal=O(1)$ and thus $\gamma=O(1).$
   \end{rmk}

{Even though $\gamma=O(1)$ or $\|\wnk\|^{-2}$ is theoretically enough, for numerical safety and efficiency, we choose the penalty number to be
	\begin{align}\label{pnt_number}
        \gamma=2\max\{\frac{1}{h},\|\wnk\|^{-2}\}.
    \end{align}
   The numerical results in Section \ref{se:num} show that spurious eigenvalues do not occur with $\gamma$ in (\ref{pnt_number}) for $m\leq20$. The auxiliary scheme for the finite element method \cite{lu2021auxiliary} is using $\gamma=O(h^{-3})$. Our scheme requires a much smaller $\gamma$ and thus better prevents the computation of spurious eigenvalues. Even when spurious eigenvalues emerge, they can be readily identified and discarded through the application of our recomputing Algorithm \ref{alg:kcm}.}

\section{Preconditioners for eigensolvers}\label{se:pre}
In this section, we present two preconditioners for the Locally Optimal Block Preconditioned Conjugate Gradient  (LOBPCG) eigensolver, which is capable of computing multiple eigenpairs.     

    \subsection{Preconditioning}
  The LOBPCG method is genuinely similar to the conjugate gradient method. The latter minimizes $\vec x\mapsto \frac{1}{2}\vec x'A\vec x-\vec b'\vec x$ to solve $A\vec x=\vec b$ while the LOBPCG method optimizes the Rayleigh quotient $\rho(\vec x)=(\vec x'A\vec x)/(\vec x' M\vec x)$ to solve $A\vec x=\lambda M\vec x$ for $(\lambda,\vec x)$, where $A$ and $M$ are give Hermitian matrices. More details can be found in \cite{LOBPCG_Knyazev, knyazev2007block}. 
   The efficiency of the LOBPCG method is highly dependent on the preconditioning. As proved in \cite{neymeyr2006geometric}, the LOBPCG method is technically able to achieve one-step convergence with an optimal preconditioner $\Pcal$. Such a preconditioner is almost impossible to find. But for our auxiliary scheme 
   $$(\Acal M_0\Acal'+\gamma \Bcal'\Bcal)\vec x=\lambda \vec x,$$ 
   a naturally simple and effective preconditioner is  
   $$\Pcal=\Acal \Acal'+\gamma \Bcal'\Bcal.$$
According to \cite{neymeyr2001geometric2}, the convergence speed depends on the condition number of $\Pcal^{-1}(\Acal M_0\Acal'+\gamma\Bcal'\Bcal)$.
 It is equivalent  to considering the extrema of the Rayleigh quotient: 
    \begin{align}
        \rho(\vec x)=\frac{\|M_0^{\frac{1}{2}}\Acal'\vec x\|^2+\gamma\|\Bcal\vec x\|^2}{\|\Acal'\vec x\|^2+\gamma\|\Bcal\vec x\|^2}.
    \end{align}
Since $M_0$ is positive diagonal, we assume $\varepsilon_0,\varepsilon_1$ to be its minimum and maximum entry, respectively. Then we have $\rho(\vec x) \subset[\varepsilon_0,\varepsilon_1]$, so that the condition number of $\Pcal^{-1}(\Acal M_0\Acal'+\gamma\Bcal'\Bcal)$ is less than $\varepsilon_1/\varepsilon_0$, which is much smaller than the condition number of $\Acal M_0\Acal'+\gamma\Bcal'\Bcal.$ 

Using Proposition \ref{commutativity_of_K_prime}, we can write  the preconditioner $\Pcal$ in the matrix form: 
    \begin{align}\label{penalised_marix}
        \Pcal = \begin{pmatrix}
            \gamma\Dcal_1\Dcal1'+\Dcal_2\Dcal_2'+\Dcal_3\Dcal_3' & (\gamma-1)\Dcal_1'\Dcal_2 & (\gamma-1)\Dcal_1'\Dcal_3\\
            (\gamma-1)\Dcal_2'\Dcal_1 & \Dcal_1\Dcal1'+\gamma\Dcal_2\Dcal_2'+\Dcal_3\Dcal_3' & (\gamma-1)\Dcal_2'\Dcal_3\\
            (\gamma-1)\Dcal_3'\Dcal_1 &(\gamma-1)\Dcal_3'\Dcal_2 & \Dcal_1\Dcal1'+\Dcal_2\Dcal_2'+\gamma\Dcal_3\Dcal_3'
        \end{pmatrix}.
    \end{align}

    \begin{rmk}
        In case $\wnk=\vec 0,\ \Acal\Acal'+\Bcal'\Bcal$ is singular. We can impose a shift $c>0$ and solve the shifted problem $(\Acal M_0\Acal'+\gamma \Bcal'\Bcal+c)\vec x=\lambda \vec x$ instead, and subtract $c$ after we obtain the eigenvalues of the shifted problem.
    \end{rmk}

    \subsection{The FFT-based solver}
    According to \cite{chen1987solution_circulant}, every $N\times N$ circulant matrix can be diagonalized by FFT unitary matrix $F\in \CC^{N\times N}$ whose $(i,j)$-th entry is 
    $$F_{ij}=\frac{1}{\sqrt{N}}\omega^{(i-1)(j-1)},\ \omega=\exp(\mathrm{i}\frac{2\pi}{N}).$$
    There holds  $\forall\vec a=(a_1,\cdots,a_N)^T\in\CC^{N\times 1}$
    \begin{align}\label{diag_of_circ}
        \text{Circ}(\vec a)=F\text{diag}(\xi_1,\cdots,\xi_N)F',\ \ \ \xi_i=\sum\limits_{j=1}^Na_j\omega^{(i-1)(j-1)}   .
    \end{align}
We define a Kronecker product of FFT matrices by 
$$\Fcal:=F\otimes F\otimes F.$$
Using Lemma \ref{abelian_kron}, the mixed product theory and (\ref{diag_of_circ}) lead to the following corollary.
     \begin{cor}
       If we assume 
       $$\Lambda_i:=F'\mathbb{D}_iF, \quad  \mbox{for   }  i=0,1$$ and $\Lambda_0,\Lambda_1$ are both diagonal matrices, then  $K_i,M_i$ can simultaneously be diagonalized by $\Fcal$. More precisely, 
        \begin{align}  
        \Fcal'K_1\Fcal=I_{N}\otimes I_{N}\otimes\Lambda_1,\ \Fcal'K_2\Fcal=I_N\otimes \Lambda_1\otimes I_N,\ \Fcal'K_3\Fcal=\Lambda_1\otimes I_{N}\otimes I_{N},\\
        \Fcal'M_1\Fcal=I_{N}\otimes I_{N}\otimes\Lambda_0,\ \Fcal'M_2\Fcal=I_N\otimes \Lambda_0\otimes I_N,\ \Fcal'M_3\Fcal=\Lambda_0\otimes I_{N}\otimes I_{N}.
       \end{align}
Furthermore, $K_iK_j,\ K_iK_j',\ K_iL_j,\ K_iL_j'$ can also be diagonalized by $\Fcal.$ 
    \end{cor}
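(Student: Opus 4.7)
The plan is to reduce the corollary to an immediate application of equation (4.10) together with the mixed-product property of the Kronecker product.

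First, I would observe that both stencil matrices $\mathbb{D}_0$ and $\mathbb{D}_1$ (in their second-order form \eqref{stencils} as well as their higher-order generalizations $\mathbb{D}_0^{(k)}, \mathbb{D}_1^{(k)}$) are circulant by construction. Hence equation \eqref{diag_of_circ} applies directly to each of them, yielding $F'\mathbb{D}_iF=\Lambda_i$ diagonal for $i=0,1$. This is the only spectral input needed.

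Next, I would unfold $\Fcal'K_i\Fcal$ and $\Fcal'L_i\Fcal$ via the mixed-product property, which appears already as Lemma \ref{abelian_kron} (or more precisely its underlying identity from \cite[Section 4.5.5]{golub2013matrix}):
\begin{equation*}
(A_1\otimes A_2\otimes A_3)(B_1\otimes B_2\otimes B_3)=(A_1B_1)\otimes(A_2B_2)\otimes(A_3B_3).
\end{equation*}
Applying this twice with $\Fcal'=F'\otimes F'\otimes F'$ and $\Fcal=F\otimes F\otimes F$, and using $F'F=I_N$, each of $K_1,K_2,K_3,L_1,L_2,L_3$ is conjugated into a Kronecker product in which exactly one factor is $\Lambda_1$ (respectively $\alpha_j\Lambda_0$) and the remaining two are identities, matching the three stated formulas. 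The only subtle bookkeeping is to place the $\Lambda$ factor in the tensor slot dictated by the definition \eqref{blocks}, and to keep the scaling constant ($1/h$ or $\alpha_j$) attached to the correct slot.

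For the ``furthermore'' part about $K_iK_j$, $K_iK_j'$, $K_iL_j$, $K_iL_j'$, I would argue that since conjugation by $\Fcal$ is a ring homomorphism,
\begin{equation*}
\Fcal'(XY)\Fcal=(\Fcal'X\Fcal)(\Fcal'Y\Fcal),
\end{equation*}
and the conjugate transpose commutes with the Kronecker product, so each of these four products becomes, after conjugation by $\Fcal$, a product of two Kronecker products of diagonal matrices. A final application of the mixed-product property collapses this into a single Kronecker product of diagonal matrices, which is itself diagonal. Diagonality is preserved because $\Lambda_0$, $\Lambda_1$, their conjugate transposes, and the identity all mutually commute in each tensor slot.

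I do not anticipate any genuine obstacle: the argument is a routine linear-algebra calculation, and the content is essentially a restatement of the fact that a tensor product of circulant matrices is simultaneously diagonalizable by the tensor product of their diagonalizing FFT matrices. The only place one must be careful is tracking which tensor slot receives $\Lambda_0$ versus $\Lambda_1$ versus $I_N$, and verifying that taking conjugate transposes (for the primed factors) does not disturb diagonality.
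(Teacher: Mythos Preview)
Your proposal is correct and follows exactly the route the paper indicates: it invokes the circulant diagonalization \eqref{diag_of_circ} for $\mathbb{D}_0,\mathbb{D}_1$ and then applies the mixed-product property (Lemma~\ref{abelian_kron}) to unfold $\Fcal'K_i\Fcal$ and $\Fcal'L_i\Fcal$, with the ``furthermore'' clause handled by closure of diagonal matrices under products and conjugate transposes. The paper itself gives no further detail beyond citing these same ingredients, so your write-up is a faithful expansion of the intended argument.
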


   Operations involved in solving the matrix in (\ref{penalised_marix}) are computing eigenvalues and applying FFT, inverse FFT. We summarize the procedure in Algorithm 5.1.
    \begin{algorithm}[h!]
        \caption{The FFT-based solver}
        \begin{algorithmic}[1]
           \STATE \textbf{Input:} Diagonal matrix $D\in\mathbb{R}^{n\times n}$, vector $\vec b\in \RR^{n},\ n=N^3.$
           \STATE \textbf{Output:} Solution vector $\vec x$
           \STATE Compute $d_{i,j,k}=d_{1,i}+d_{2,j}+d_{3,k},\ D:=$diag(reshape($(d_{i,j,k}),[\ ],n$)).
           \STATE Reshape: $B=$reshape($\vec b,[N,N,N]$).
           \FOR{$d=1:3$}
                \STATE $B=$fft($B,[\ ],d$); \ \ \ \ \% \textit{apply FFT along each dimension.} 
           \ENDFOR
           \STATE $B=$reshape($D^{-1}$reshape($B,[\ ],n$),$[N,N,N]$);
           \FOR{$d=1:3$}
                \STATE $B=$ifft($B,[\ ],d$); \ \ \ \ \%\textit{apply inverse FFT along each dimension.} 
           \ENDFOR
           \STATE Return reshape($B,[\ ],n$);
        \end{algorithmic}
    \end{algorithm}

    \subsection{The multigrid solver with distributive smoother}

  An iterative multigrid method with the ILU-type smoothing process \cite{TrotMult2001} is shown in Algorithm \ref{MG_ilu}.
    \begin{algorithm}[h!]
        \caption{The multigrid solver  with ILU-type iteration, MG($x^{(0)},\vec b_l,A_l,n_1,n_2$)}
        \label{MG_ilu}
        \begin{algorithmic}[1]
            \STATE\textbf{Input:} Initial guess $\vec x^{(0)},$ level $l$, load vector $\vec b_l$, stiffness matrix $A_l$ and its ILU \\    
            \ \ \ \ \ \ \ \ \ \ \ factorization $A_l\approx L_lU_l.$ Restriction operator $R_l.$
            \STATE \textbf{Output:} Solution $A_l^{-1}\vec b_l.$
            \IF{$l==1$}
                \STATE Return $A_1^{-1}\vec x^{(0)}.$
            \ENDIF
            \STATE\textbf{Pre-smoothing}: Smooth $\vec x^{(i+1)}=\vec x^{(i)}-(L_lU_l)^{-1}(\vec b_l-A_l\vec x^{(i)})$ for $n_1$ times.
            \STATE Compute residual $\vec r=\vec b_l-A_l\vec x^{(n_1)}.$
            \STATE\textbf{Correction}: $\vec x^{(i+1)}=\text{MG}(\vec 0,R_l\vec r,A_{l-1},n_1,n_2).$
            \STATE\textbf{Post-smoothing}: Smooth $\vec x^{(i+1)}=\vec x^{(i)}-(L_lU_l)^{-1}(\vec b_l-A_l\vec x^{(i)})$ for $n_2$ times.
            \STATE Return $\vec x^{(n_1+n_2+1)}.$
        \end{algorithmic}
    \end{algorithm}

  It is important to note that if the system is more ill-conditioned, the ILUs will be less effective and more prone to failure. Therefore it is necessary to apply some pretreatments to the system 
 \begin{align}\label{eq:sys}
 (\Acal \Acal'+\gamma \Bcal'\Bcal)\vec x=\vec f.
   \end{align}
 We introduce the intermediate variable $\vec p=\Bcal\vec u,$ then the system (\ref{eq:sys}) is equivalent to
    \begin{align}
       \begin{pmatrix}
            \Acal \Acal'+(\gamma+1) \Bcal'\Bcal & -\Bcal' \\ -\Bcal & I
        \end{pmatrix}\begin{pmatrix}
            \vec x\\ \vec p
        \end{pmatrix}=\begin{pmatrix}
            \vec f\\ \vec 0
        \end{pmatrix}.
    \end{align}
Then we define matrices 
$$\Lcal=\begin{pmatrix}
            \Acal \Acal'+(\gamma+1) \Bcal'\Bcal & -\Bcal' \\ -\Bcal & I
        \end{pmatrix}
, \quad 
\Lcal_0:=\begin{pmatrix}
        I & \Bcal'\\ \gamma \Bcal & (\gamma+1)L_{\wnk}
    \end{pmatrix},$$ 
    then 
    $$\Lcal\Lcal_0=\begin{pmatrix}
            \vec L_{\wnk} & \ \\ (\gamma-1)\Bcal+c & \gamma L_{\wnk}
        \end{pmatrix}.$$
         To compute the system (\ref{eq:sys}), one may first solve 
         $$\Lcal\Lcal_0\begin{pmatrix}
            \vec y\\ \vec q
        \end{pmatrix} =\begin{pmatrix}
            \vec f\\ \vec 0
        \end{pmatrix}, \mbox{for  } \vec y,\vec q,$$ 
         then compute $$\vec x=\vec y+\Bcal'\vec q.$$
          In this approach,  the ILU factorization will be performed on the matrix with a better condition number. This technique is called the distributive smoother in \cite{gaspar2008distributive}.
 
 Each smoothing step requires the solution of a triangular matrix.  In \cite{SAIT_Lu}, the authors provide a threshold-based sparse approximate inverse for triangular (SAIT) matrices for computing the sparse approximate inverse of triangular matrices. The idea is to use Jacobi's iteration while dropping small entries to ensure sparsity, see \cite{SAIT_Lu} for more details.  Since the ILU-SAIT smoother does not require solving a triangular system, this enables us to implement GPU parallelization in programming. Therefore, we adopt the ILU-SAIT preconditioner as the smoother of the multigrid method for parallel efficiency.


    \section{Numerical experiments}\label{se:num}
    In this section, we will consider the simple cubic (SC), face centered cubic (FCC) and body centered cubic (BCC) lattices of the three-dimensional PCs:
 		\begin{align}\label{translation_vecs}
                    \left\{                    
                    \begin{aligned}
                        &\text{SC lattice:\ }\vec a_1=(l,0,0)^T, \ \ \vec a_2=(0,l,0)^T,\ \ \vec a_3=(0,0,l)^T,\\
                        &\text{FCC lattice:\ }\vec a_1=(0,\frac{l}{2},\frac{l}{2})^T, \ \vec a_2=(\frac{l}{2},0,\frac{l}{2})^T,\ \vec a_3=(\frac{l}{2},\frac{l}{2},0)^T,\\
                        &\text{BCC lattice:\ }\vec a_1=(-\frac{l}{2},\frac{l}{2},\frac{l}{2})^T,\ \vec a_2=(\frac{l}{2},-\frac{l}{2},\frac{l}{2})^T,\ \vec a_3=(\frac{l}{2},\frac{l}{2},-\frac{l}{2})^T.
                    \end{aligned}		
                    \right.
			\end{align}
Here $l>0$ is called the lattice constant. A primitive cell of the SC lattice is the cube $[0,l]^3$. The other two lattices can be transferred to the cube as described in Section \ref{se:model}.
To obtain a complete band gap figure, we do not need to traverse the whole region, instead, based on the symmetry of Brillouin zones \cite{PCs_mold}, only information at symmetry points and on their connecting lines is required. The symmetry points of the Brillouin zone of SC, FCC, and BCC lattices are:
   \begin{align}
        \begin{aligned}
            &\text{SC:\ \ }\Gamma(0,0,0),\ L(\frac{\pi}{l},0,0),\ M(\frac{\pi}{l},\frac{\pi}{l},0),\ N(\frac{\pi}{l},\frac{\pi}{l},\frac{\pi}{l}),\\
            &\text{FCC:\ }X(0,\frac{2\pi}{l},0),\ U(\frac{\pi}{2l},0,\frac{\pi}{2l}),\ L(\frac{\pi}{l},\frac{\pi}{l},\frac{\pi}{l}),\ \Gamma(0,0,0),\ W(\frac{\pi}{l},\frac{2\pi}{l},0),\ K(\frac{3\pi}{2l},\frac{3\pi}{2l},0),\\
        &\text{BCC:\ }H'(0,0,\frac{2\pi}{l}),\ \Gamma(0,0,0),\ P(\frac{\pi}{l},\frac{\pi}{l},\frac{\pi}{l}),\ N(\frac{\pi}{l},0,\frac{\pi}{l}),\ H(0,\frac{2\pi}{l},0).
        \end{aligned}
    \end{align}
We will compute the eigenvalues of SC, FCC, and BCC lattices with both isotropic and anisotropic materials, plot their band gaps, and demonstrate the efficiency of GPU acceleration. All programs are implemented on a platform equipped with an NVIDIA A100-PCIE-40GB GPU. {To facilitate the reproduction and comparison of our numerical results, the source code has been made publicly available on GitHub at \href{https://github.com/Epsilon-79th/linear-eigenvalue-problems-in-photonic-crystals}{https://github.com/Epsilon-79th/linear-eigenvalue-problems-in-photonic-crystals}.}

    \subsection{Isotropic cases: $\varepsilon\equiv1$}
   We use this simple case to demonstrate the accuracy of our method.  We can compute the exact eigenvalues of this problem.
We let $\vec H_{\wnk}(\vec x):=\vec C\exp(\mathrm{i}\frac{2\pi}{l}\vec k\cdot\vec x)$ for some $\vec k\in\ZZ^3$ 
    and $\vec C\in\CC^3$, then
    \begin{align*}
        \left\{
            \begin{aligned}
                &\nabla_{\wnk}\times\nabla_{\wnk}\times\vec H_{\wnk}=\omega^2\vec H_{\wnk},\\
                &\nabla_{\wnk}\cdot\vec H_{\wnk}=0.
            \end{aligned}\right.
    \end{align*}
    becomes
    \begin{align*}
        \left\{
            \begin{aligned}
                &\mathrm{i}(\wnk+\frac{2\pi}{l}\vec k)\times(\mathrm{i}(\wnk+\frac{2\pi}{l}\vec k)\times \vec C)=\omega^2\vec C,\\
                &\mathrm{i}(\wnk+\frac{2\pi}{l}\vec k)\cdot\vec C=0.
            \end{aligned}
        \right.
    \end{align*}
    Therefore the exact eigenvalues $\omega^2$'s are
    \begin{align}\label{exact_eig}
        \omega^2=|\wnk+\frac{2\pi}{l}\vec k|^2,\ \vec k\in\ZZ^3.
    \end{align}

    As the simplest case, we choose lattice constant $l=2\pi$ and Bloch vector $\wnk=(\pi/l,0,0)$.    Errors and convergence rates of the 2nd, 4th, 6th and 8th order schemes are shown in Tables \ref{iso1} - \ref{iso4}. It shows that the desired accuracy can be achieved using the kernel compensation method with compatible MFD schemes.
   
    \begin{table}[!ht]
\centering{
\caption{Accuracy of the 2nd order scheme, isotropic media}
\label{iso1}
\begin{tabular}{|c|c|c|c|c|c|c|c|}
\hline
$N$& $N=10$  & \multicolumn{2}{c|}{$N=20$}  & \multicolumn{2}{c|}{$N=40$}  & \multicolumn{2}{c|}{$N=80$} \\ 
 \hline
 $\omega^2$ & $|\omega^2-\omega_h^2|$ & $|\omega^2-\omega_h^2|$ & ord & $|\omega^2-\omega_h^2|$ & ord & $|\omega^2-\omega_h^2|$ & ord\\ 
 \hline
0.25 & 4.11e-15  & 7.22e-16 & \  & 1.11e-16 & \  & 1.11e-16 & \ \\ 
\hline
0.25 & 1.00e-14  & 2.70e-14 & \  & 2.25e-14 & \  & 1.11e-16 & \ \\ 
\hline
0.25 & 8.17e-03  & 2.05e-03 &     1.99 & 5.14e-04 &     2.00 & 1.28e-04 &     2.00\\ 
\hline
0.25 & 8.17e-03  & 2.05e-03 &     1.99 & 5.14e-04 &     2.00 & 1.28e-04 &     2.00\\ 
\hline
1.25 & 3.25e-02  & 8.20e-03 &     1.99 & 2.05e-03 &     2.00 & 5.14e-04 &     2.00\\ 
\hline
1.25 & 3.25e-02  & 8.20e-03 &     1.99 & 2.05e-03 &     2.00 & 5.14e-04 &     2.00\\ 
\hline
\end{tabular}
}
\end{table}

\begin{table}[!ht]
\centering{
\caption{Accuracy table of the 4th order scheme, isotropic media}
\label{iso2}
\begin{tabular}{|c|c|c|c|c|c|c|c|}
\hline
$N$& $N=10$  & \multicolumn{2}{c|}{$N=20$}  & \multicolumn{2}{c|}{$N=40$}  & \multicolumn{2}{c|}{$N=80$} \\ 
 \hline
 $\omega^2$ & $|\omega^2-\omega_h^2|$ & $|\omega^2-\omega_h^2|$ & ord & $|\omega^2-\omega_h^2|$ & ord & $|\omega^2-\omega_h^2|$ & ord\\ 
 \hline
0.25 & 3.77e-15  & 8.33e-16 & \  & 5.55e-17 & \  & 2.22e-16 & \ \\ 
\hline
0.25 & 1.45e-14  & 2.56e-14 & \  & 1.67e-16 & \  & 5.55e-17 & \ \\ 
\hline
0.25 & 1.05e-03  & 6.78e-05 &     3.96 & 4.27e-06 &     3.99 & 2.67e-07 &     4.00\\ 
\hline
0.25 & 1.05e-03  & 6.78e-05 &     3.96 & 4.27e-06 &     3.99 & 2.67e-07 &     4.00\\ 
\hline
1.25 & 1.43e-03  & 9.08e-05 &     3.97 & 5.70e-06 &     3.99 & 3.57e-07 &     4.00\\ 
\hline
1.25 & 1.43e-03  & 9.08e-05 &     3.97 & 5.70e-06 &     3.99 & 3.57e-07 &     4.00\\ 
\hline
\end{tabular}
}
\end{table}

   \begin{table}[!ht]
\centering{
\caption{Accuracy table of the 6th order scheme, isotropic media}
\label{iso3}
\begin{tabular}{|c|c|c|c|c|c|c|c|}
\hline
$N$& $N=10$  & \multicolumn{2}{c|}{$N=20$}  & \multicolumn{2}{c|}{$N=40$}  & \multicolumn{2}{c|}{$N=80$} \\ 
 \hline
 $\omega^2$ & $|\omega^2-\omega_h^2|$ & $|\omega^2-\omega_h^2|$ & ord & $|\omega^2-\omega_h^2|$ & ord & $|\omega^2-\omega_h^2|$ & ord\\ 
 \hline
0.25 & 4.00e-15  & 1.11e-15 & \  & 2.22e-16 & \  & 4.44e-16 & \ \\ 
\hline
0.25 & 8.44e-15  & 1.57e-14 & \  & 1.11e-16 & \  & 1.67e-16 & \ \\ 
\hline
0.25 & 1.00e-04  & 1.65e-06 &     5.93 & 2.61e-08 &     5.98 & 4.09e-10 &     6.00\\ 
\hline
0.25 & 1.00e-04  & 1.65e-06 &     5.93 & 2.61e-08 &     5.98 & 4.09e-10 &     6.00\\ 
\hline
1.25 & 8.18e-05  & 1.33e-06 &     5.95 & 2.09e-08 &     5.99 & 3.27e-10 &     6.00\\ 
\hline
1.25 & 8.18e-05  & 1.33e-06 &     5.95 & 2.09e-08 &     5.99 & 3.27e-10 &     6.00\\ 
\hline
\end{tabular}
}
\end{table}

    \begin{table}[!ht]
\centering{
\caption{Accuracy table of the 8th order scheme, isotropic media}
\label{iso4}
\begin{tabular}{|c|c|c|c|c|c|c|c|}
\hline
$N$& $N=10$  & \multicolumn{2}{c|}{$N=20$}  & \multicolumn{2}{c|}{$N=40$}  & \multicolumn{2}{c|}{$N=80$} \\ 
 \hline
 $\omega^2$ & $|\omega^2-\omega_h^2|$ & $|\omega^2-\omega_h^2|$ & ord & $|\omega^2-\omega_h^2|$ & ord & $|\omega^2-\omega_h^2|$ & ord\\ 
 \hline
0.25 & 5.11e-15  & 6.66e-16 & \  & 1.11e-16 & \  & 2.22e-16 & \ \\ 
\hline
0.25 & 1.14e-14  & 1.09e-14 & \  & 2.78e-17 & \  & 5.55e-16 & \ \\ 
\hline
0.25 & 9.16e-06  & 3.85e-08 &     7.89 & 1.53e-10 &     7.97 & 6.00e-13 &     7.99\\ 
\hline
0.25 & 9.16e-06  & 3.85e-08 &     7.89 & 1.53e-10 &     7.97 & 6.01e-13 &     7.99\\ 
\hline
1.25 & 5.35e-06  & 2.21e-08 &     7.92 & 8.75e-11 &     7.98 & 3.12e-13 &     8.13\\ 
\hline
1.25 & 5.35e-06  & 2.21e-08 &     7.92 & 8.75e-11 &     7.98 & 3.04e-13 &     8.17\\ 
\hline
\end{tabular}
}
\end{table}

    \begin{figure}[!ht]
        \centering        
        \includegraphics[height=6cm]{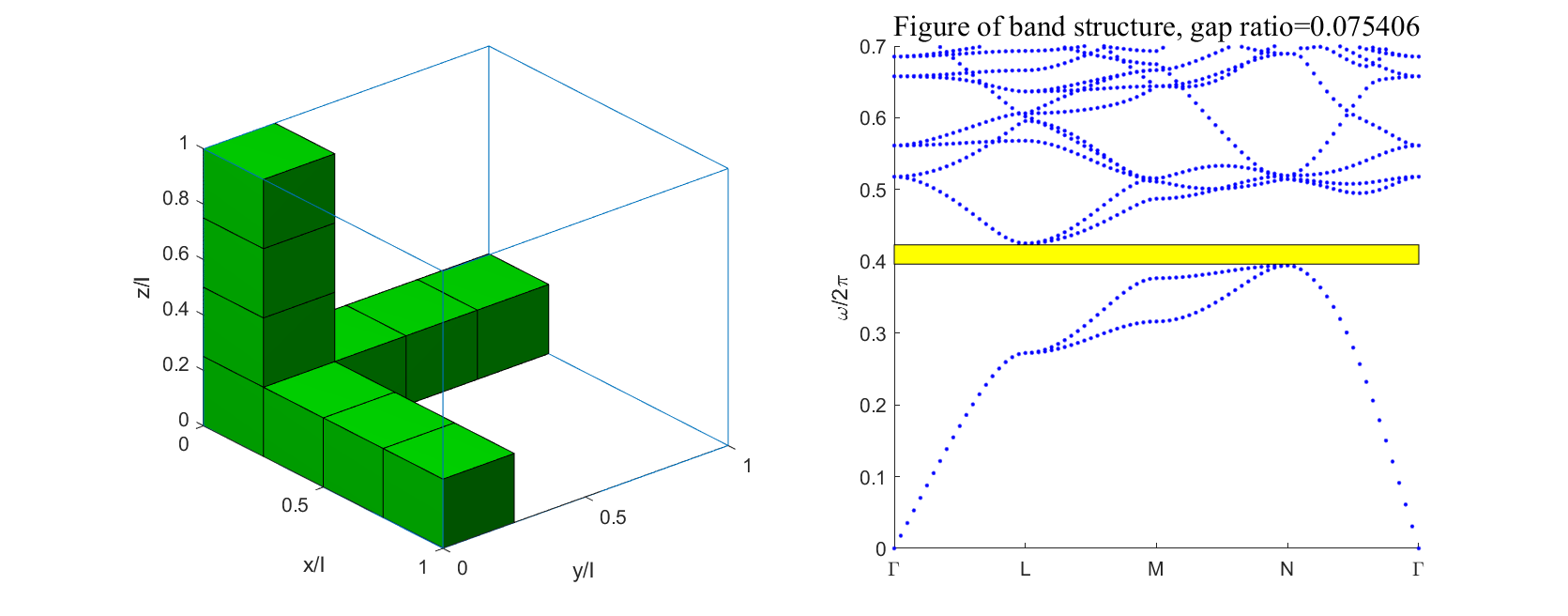}
        \caption{Geometric structure of a single lattice and its band gap in Section \ref{sec:cubic}. $\varepsilon_1/\varepsilon_0=13.$ Grid size $N=100.$ The ratio of the band gap is 0.075406.}
        \label{model1}
    \end{figure}

    \begin{figure}[!ht]
        \centering        
        \includegraphics[height=6cm]{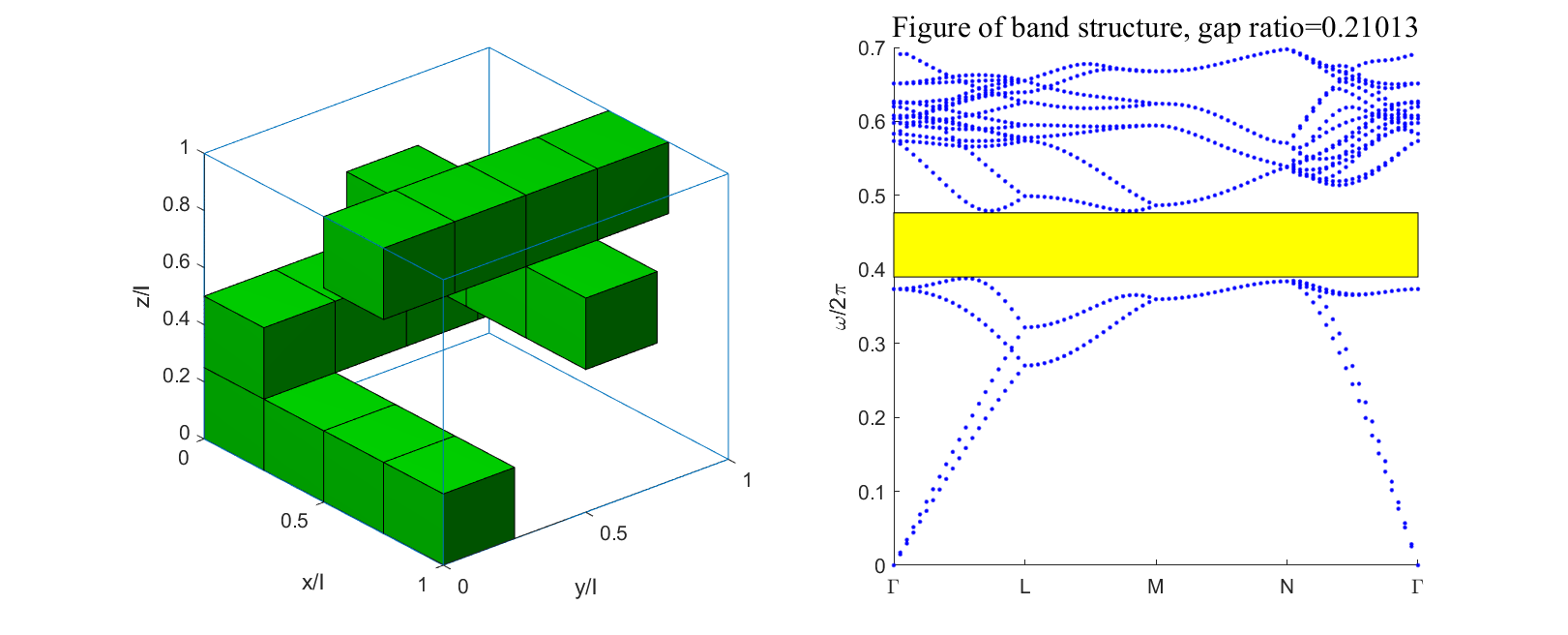}
        \caption{Geometric structure of a single lattice and its band gap in Section \ref{sec:cubic}. $\varepsilon_1/\varepsilon_0=13.$ Grid size $N=100.$ The ratio of the band gap is 0.21013.}
        \label{model2}
    \end{figure}

    \subsection{Cubic lattices}\label{sec:cubic}
    First, we consider the material shown in the left part of Figure \ref{model1} - \ref{model2}. The dielectric function $\varepsilon$ is a piecewise constant that equals to $\varepsilon_{1}$ inside the material and $\varepsilon_0$ outside, with $\varepsilon_1/\varepsilon_0=13$.  The entry of $M_0$ is set to $\varepsilon_1^{-1}$ if the corresponding edge DoF is inside the material, otherwise, it is set to $\varepsilon_0^{-1}$. The ratio of the band gap is defined by
    \begin{align}
        \text{ratio}:=\frac{\omega_{\text{up}}-\omega_{\text{min}}}{(\omega_{\text{up}}+\omega_{\text{min}})/2},
    \end{align}
where $\omega_{\text{up}}$ is the minimum point above the gap while $\omega_{\text{low}}$ the maximum point below the gap. With a larger band gap, the material is able to block light over a wider frequency range. Here are two examples of non-homogeneous materials with flat interfaces. The geometric structure of a single lattice is shown in Figures \ref{model1} - \ref{model2} along with the band gap. The results are obtained when the LOBPCG iteration stops at a relative error of less than $10^{-5}$ using the 2nd-order MFD discretization. These numerical results verify the results obtained in \cite{lu2017discontinuous, lu2021auxiliary}.

    Next, we consider the anisotropic SC lattice with a curved interface. The material domain in which the expression is as follows
    \begin{align}
        \begin{aligned}
            &\big\{(x-\frac{l}{2})^2+(y-\frac{l}{2})^2+(z-\frac{l}{2})^2\leq (0.345l)^2\big\}\cup\big\{(y-\frac{l}{2})^2+(z-\frac{l}{2})^2\leq (0.11l)^2\big\}\\
        &\big\{(x-\frac{l}{2})^2+(z-\frac{l}{2})^2\leq (0.11l)^2\big\}\cup \big\{(x-\frac{l}{2})^2+(y-\frac{l}{2})^2\leq (0.11l)^2\big\}\subset[0,l]^3.
        \end{aligned}        
    \end{align}
The dielectric function $\varepsilon$ remains the same as in the last two cases. The band gap is shown in Figure \ref{model_curv1}, which corresponds to the result in  \cite{null_free_JD, lyu2021fame}.

   \begin{figure}[!ht]
        \centering        
        \includegraphics[height=6cm]{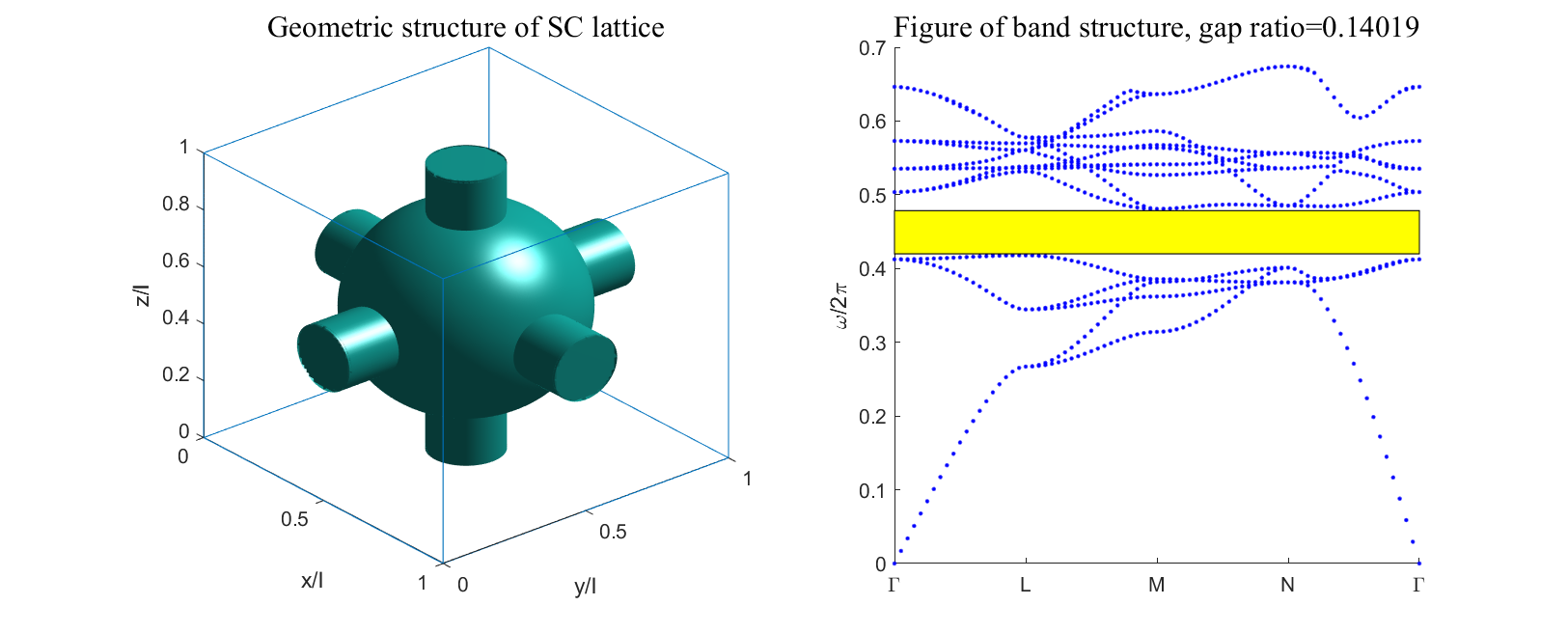}
        \caption{Geometric structure of a single lattice with a curved interface and its band gap in Section \ref{sec:cubic}. Radii of the center sphere and cylinders are $0.345l$ and $0.11l$. $\varepsilon_1/\varepsilon_0=13.$ Grid size $N=150.$ The ratio of the band gap is 0.14019.}
        \label{model_curv1}
    \end{figure}

   \subsection{BCC and FCC lattices} \label{sec:bcc_fcc}
   \begin{figure}[!ht]
        \centering        
        \includegraphics[height=6cm]{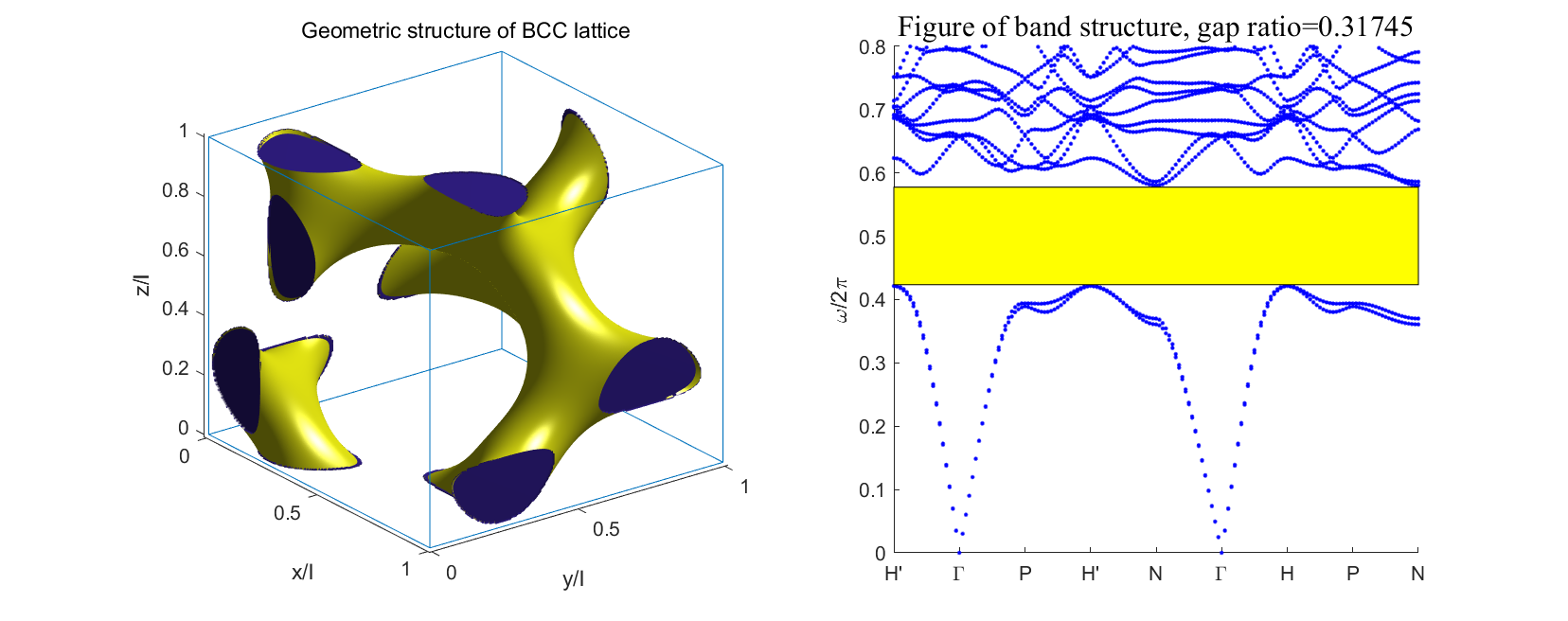}
        \caption{Geometric structure of the BCC lattice with a curved interface and its band gap in Sec. \ref{sec:bcc_fcc}, $\varepsilon_1/\varepsilon_0=16.$ Grid size $N=150.$ The ratio of the band gap is 0.31745.}
        \label{model_bcc}
    \end{figure}

    \begin{figure}[!ht]
        \centering        
        \includegraphics[height=6cm]{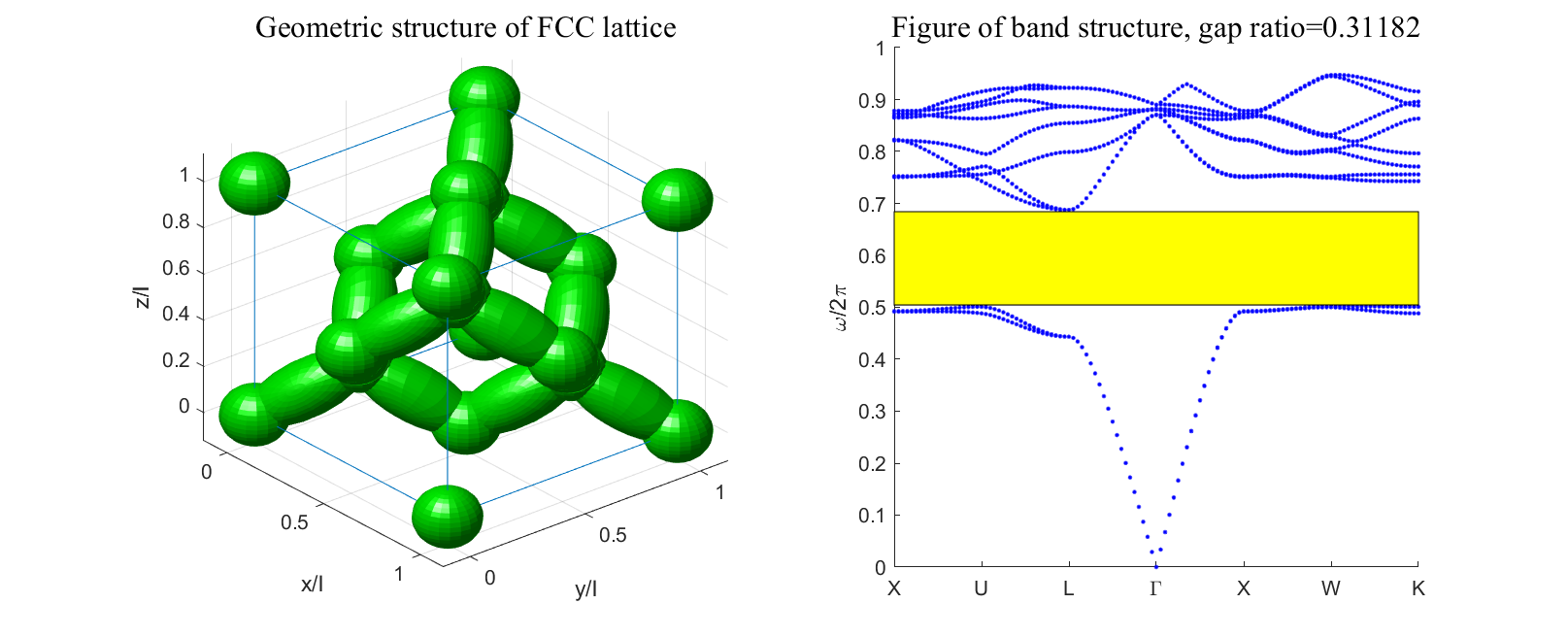}
        \caption{Geometric structure of the FCC lattice with a curved interface and its band gap in Sec. \ref{sec:bcc_fcc}, $\varepsilon_1/\varepsilon_0=13.$ Grid size $N=150.$ The ratio of the band gap is 0.31182.}
        \label{model_fcc}
    \end{figure}

  Now, we consider the examples of the BCC lattice in \cite{lu2013weyl} and the FCC lattice in \cite{huang2013eigendecomposition}. The diagonal matrix $M_0$ arising from $\varepsilon^{-1}$ is chosen to be the same as in Section \ref{sec:cubic}. Translation vectors of BCC and FCC lattices are given in (\ref{translation_vecs}).    

    We recall that $l>0$ refers to the lattice constant. The structure of BCC lattice in our example is a single gyroid \cite{lu2013weyl} that can be approximated by the set $\{g(x,y,z)>1.1\}$ with 
    \begin{align*}
        g(x,y,z)=\sin(\frac{2\pi}{l}x)\cos(\frac{2\pi}{l}y)+\sin(\frac{2\pi}{l}y)\cos(\frac{2\pi}{l}z)+\sin(\frac{2\pi}{l}z)\cos(\frac{2\pi}{l}x),
    \end{align*}
which is shown in Figure \ref{model_bcc}.  

The FCC lattice in Figure \ref{model_fcc} is a diamond structure with $sp^3$-like configuration consisting of dielectric spheres and connecting spheroids. The radius of spheres is $r=0.12l$. The spheroids have their foci at the centers of spheres and minor axis of length $b=0.11l$.  A detailed description of the geometric structure of the FCC lattice can be found in \cite{huang2013eigendecomposition}. Again, the band gaps of the BCC and FCC lattices shown on the right in Figures \ref{model_bcc} and \ref{model_fcc} agree with the results in \cite{huang2013eigendecomposition, lu2013weyl}.

    \subsection{Efficiency comparion with preconditioner}

 Next, in Tables \ref{time_comp_sc_iso} - \ref{time_comp_fc}, we present CPU and GPU  time for different lattices and preconditioner solvers.
We assume $\wnk=(\pi,\pi,\pi)/l$ for all lattices above.

        \begin{table}[!ht]
  \caption{CPU/GPU time of first 10 eigenvalues and acceleration ratio of SC lattices: isotropic material ($\varepsilon\equiv1$) in Table \ref{iso4}. }
              \label{time_comp_sc_iso}
       \centering
               \begin{tabular}{|c|c|c|c|c|}
        \hline
            \multirow{2}{*}{DoFs}  & \multicolumn{4}{c|}{Preconditioner solver: FFT. }  \\
           \cline{2-5}
             & Steps & GPU time & CPU time & Speed up \\
            \hline 
            $3\times 100^3$ & 13 & 11.41s & 238.82s & 20.93  \\
            \hline 
            $3\times 120^3$ & 13 & 19.69s & 374.61s & 19.03   \\
            \hline
            $3\times 150^3$ & 12 & 35.98s & 726.08s & 20.18  \\
            \hline
            \multirow{2}{*}{DoFs}  & \multicolumn{4}{c|}{Preconditioner solver: multigrid.} \\
           \cline{2-5}
             & Steps & GPU time & CPU time & Speed up  \\
            \hline
             $3\times 96^3$ & 13 & 22.19s &1712.09s & 77.16  \\
            \hline 
            $3\times 112^3$ & 12 & 31.66s &2775.40s & 87.66   \\
            \hline
            $3\times 128^3$ & 12 & 45.98s & 4128.31s & 89.79  \\
           \hline
            \end{tabular}
        
%
     \end{table}
    
           \begin{table}[!ht]
        \caption{CPU/GPU time of first 10 eigenvalues and acceleration ratio of SC lattices: anisotropic material in Figure \ref{model_curv1}, $\wnk=(\pi,\pi,\pi)/l.$ }
        \centering
        \begin{tabular}{|c|c|c|c|c|}
        \hline
            \multirow{2}{*}{DoFs}  & \multicolumn{4}{c|}{Preconditioner solver: FFT.}  \\
           \cline{2-5}
             & Steps & GPU time & CPU time & Speed up \\
            \hline 
            $3\times 100^3$ &  44 & 21.68s & 405.96s & 18.73 \\
            \hline
            $3\times 120^3$ & 43 & 38.12s & 746.12s & 19.57 \\
            \hline
            $3\times 150^3$ &  44 & 71.10s & 1217.73s & 17.13 \\
            \hline
            \multirow{2}{*}{DoFs}  & \multicolumn{4}{c|}{Preconditioner solver: multigrid.} \\
           \cline{2-5}
             & Steps & GPU time & CPU time & Speed up  \\
            \hline
            $3\times 96^3$ &  43 & 71.11s & 5598.49s & 78.73 \\
            \hline
            $3\times 112^3$ & 45 & 109.00s & 8186.02s & 75.05 \\
            \hline
            $3\times 128^3$ &  44 & 162.10s & 12917.07s & 79.69 \\
            \hline
            \end{tabular}
 
         \label{time_comp_sc_anti}
    \end{table}
   
    \begin{table}[!ht]
        \centering
  \caption{CPU/GPU time of first 10 eigenvalues and acceleration ratio of BCC lattices in Figures \ref{model_bcc},\ $\wnk=(\pi,\pi,\pi)/l$.}
           \label{time_comp_bc}
           \begin{tabular}{|c|c|c|c|c|}
        \hline
            \multirow{2}{*}{DoFs}  & \multicolumn{4}{c|}{Preconditioner solver: FFT.}  \\
           \cline{2-5}
             & Steps & GPU time & CPU time & Speed up \\
            \hline 
            $3\times 100^3$ &  70 & 40.55s & 915.51s & 22.58 \\
            \hline
            $3\times 120^3$ & 65 & 54.92s & 1100.75s & 20.04 \\
            \hline
            $3\times 150^3$ &  70 & 132.31s & 3052.50s & 23.07 \\
            \hline
            \multirow{2}{*}{DoFs}  & \multicolumn{4}{c|}{Preconditioner solver: multigrid.} \\
           \cline{2-5}
             & Steps & GPU time & CPU time & Speed up  \\
            \hline
            $3\times 96^3$ &  71 &115.06s & 9024.70s & 78.43 \\
            \hline
            $3\times 112^3$ & 71 & 161.08s & 13814.70s & 85.76 \\
            \hline
            $3\times 128^3$ &  70 & 239.55s & 22041.10s & 92.01 \\
            \hline
            \end{tabular}
    
    \end{table}

       \begin{table}[!ht]
        \centering
    \caption{CPU/GPU time of first 10 eigenvalues and acceleration ratio of FCC lattices in Figures \ref{model_fcc},\ $\wnk=(\pi,\pi,\pi)/l$.}
          \label{time_comp_fc}
         \begin{tabular}{|c|c|c|c|c|}
        \hline
            \multirow{2}{*}{DoFs}  & \multicolumn{4}{c|}{Preconditioner solver: FFT.}  \\
           \cline{2-5}
             & Steps & GPU time & CPU time & Speed up \\
            \hline 
            $3\times 100^3$ &  56 & 34.02s & 716.15s & 21.05 \\
            \hline
            $3\times 120^3$ & 54 & 46.06s & 945.12s & 20.52 \\
            \hline
            $3\times 150^3$ &  55 & 105.98s & 2186.37s & 20.63 \\
            \hline
            \multirow{2}{*}{DoFs}  & \multicolumn{4}{c|}{Preconditioner solver: multigrid.} \\
           \cline{2-5}
             & Steps & GPU time & CPU time & Speed up  \\
            \hline
            $3\times 96^3$ &  58 & 92.71s & 7640.12s & 82.41 \\
            \hline
            $3\times 112^3$ & 59 & 128.33s & 11482.41s & 89.48 \\
            \hline
            $3\times 128^3$ &  55 & 189.22s & 17869.88s & 94.44 \\
            \hline
            \end{tabular}
    
     \end{table}

These comparisons show that the computational time of the method can be reduced by a factor of about 20 and 80 for the FFT and multigrid preconditioner solvers, respectively. The FFT solver has excellent CPU efficiency, and the effects of GPU acceleration are relatively insignificant. The multigrid solver, on the other hand, can be dramatically accelerated by the GPU, but it takes an even longer time to complete a single iteration. Meanwhile, regardless of which preconditioner solver is used, the iteration times do not differ and appear to be independent of the grid size, which is only related to the problem itself.
Our examples of FCC and BCC are derived from the numerical experiments in \cite{lyu2021fame} and \cite{null_free_JD}. Through preconditioning using FFT, our algorithm demonstrates reduced runtime, even without GPU parallelization.

    \section{Conclusions}\label{se:con}
    In this paper, we have presented a kernel compensation method based on the compatible mimetic finite difference method for the eigenproblems of photon crystals without passing through numerous zero eigenvalues. When solving the energy band gap of PCs, these zero eigenvalues increase with the mesh size and thus cause many problems for the eigensolvers. The introduction of the kernel compensation operator fills up the kernel of the eigenproblems, which is based on the compatible discrete chain complex in the MFD discretization. The compensation operators have been developed for the shifted curl operator in the SC, BCC, and FCC lattices introduced by the Floquet-Bloch theory. {This method is specifically designed for photonic crystals with periodic structures.} Two parallel effective preconditioner solvers based on the FFT method and the multigrid method have been proposed and tested. 


%


\bibliographystyle{abbrv}
\bibliography{ref}{}

\end{document}